\title{Ordinal Optimisation and the Offline Multiple Noisy Secretary Problem}
\author{ Robert Chin\thanks{Department of Electrical and Electronic Engineering, The University of Melbourne, Australia \& School of Computer Science, University of Birmingham, United Kingdom. Email: \texttt{r.chin4@student.unimelb.edu.au}}
	\And
	Jonathan E. Rowe\thanks{School of Computer Science, University of Birmingham, United Kingdom \& The Alan Turing Institute, United Kingdom}
	\And
	Iman Shames\thanks{College of Engineering \& Computer Science, Australian National University, Australia} \\
	\And
	Chris Manzie\thanks{Department of Electrical and Electronic Engineering, The University of Melbourne, Australia} \\
	\And
	Dragan Ne\v{s}i\'{c}\thanks{Department of Electrical and Electronic Engineering, The University of Melbourne, Australia}
}
\date{}
\numberwithin{equation}{section} 
\newcommand{\pushright}[1]{\ifmeasuring@#1\else\omit\hfill$\displaystyle#1$\fi\ignorespaces}
\newcommand{\pushleft}[1]{\ifmeasuring@#1\else\omit$\displaystyle#1$\hfill\fi\ignorespaces}
\newtheorem{problem}{Problem}[section]
\newtheorem{definition}{Definition}[section]
\newtheorem{proposition}{Proposition}[section]
\newtheorem{theorem}{Theorem}[section]
\newtheorem{lemma}{Lemma}[section]
\begin{document}
\maketitle

\begin{abstract}
We study the success probability for a variant of the secretary problem, with noisy observations and multiple offline selection. Our formulation emulates, and is motivated by, problems involving noisy selection arising in the disciplines of stochastic simulation and simulation-based optimisation. In addition, we employ the philosophy of ordinal optimisation - involving an ordinal selection rule, and a percentile notion of goal softening for the success probability. As a result, it is shown that the success probability only depends on the underlying copula of the problem. Other general properties for the success probability are also presented. Specialising to the case of Gaussian copulas, we also derive an analytic lower bound for the success probability, which may then be inverted to find sufficiently large sample sizes that guarantee a high success probability arbitrarily close to one.
\end{abstract}

\keywords{Ordinal optimization \and Secretary problem \and Copulas}

\section{Introduction}

Numerous problems studied in operations research are themed around the objective of `picking the best' from a finite number of alternatives, under uncertainty. The branch of \textit{ranking and selection} problems \cite{Dudewicz1980} typically considers finding the population with the best mean from a collected sample. Online (i.e. sequential) variations of ranking and selection have also appeared, e.g. \cite{Peng2018}, which are also closely related to the \textit{pure exploration} (best-arm identification) setting in multi-armed bandits \cite{Bubeck2009}. Common motivators for this line of work include stochastic search/simulation \cite{Fu2014} and the design of clinical trials \cite{Shen2020}. \\

On the other hand, the \textit{secretary problem} (also known as the "best choice" problem) is an optimal stopping problem that is commonly framed in the context of selecting the best candidate for a job, when the quality of interviewed candidates are observed sequentially in random order \cite{Bruss1984}. The well-known asymptotically optimal solution for maximising the probability of best selection features the reciprocal of Euler's number (approximately 0.37) as a stopping rule \cite[Equation (2a-10)]{Gilbert1966}. Variations of the secretary problem are abundant in the literature, for instance when multiple candidates may be selected \cite{Preater1994}, or when observations of candidate quality are corrupted by noise \cite{Krieger2012}. \\

Ordinal optimisation (OO) is another paradigm, originally introduced in \cite{Ho1992}, that is proposed for softening difficult problems in stochastic search and optimisation, motivated by the simulation-based design of discrete-event dynamical systems. It is offered as a complementary approach to conventional optimisation techniques whenever there is `little hope' of finding the global optimum solution, and its operation rests on two underlying principles: 1) by selecting the subset according to order, the selection is more `robust' to noise; and 2) by \textit{goal softening} (i.e. increasing the degree of sub-optimality), chances of success can be improved. \\

In the offline (i.e. non-sequential) formulation of OO \cite[Chapter II]{Ho2007}, the technique boils down to "randomly sample many candidate solutions, simulate their performances, and pick the best observed". This is also known as the \textit{horse-race} selection rule, which is formally proven to be optimal under the setup considered in \cite[Theorem 3.1]{Yang2002}. The outcome provided by OO is a high probability guarantee that one or more out of a selected subset of candidate solutions is an acceptable sub-optimal solution. It is interesting to note that the same philosphy of "sample many candidate solutions and pick the best observed" also independently manifests itself in a corner of the probabilistically robust control literature \cite{Vidyasagar2001}, justified by rigorous sample complexity analysis. A research area with roots from OO is the optimal computing budget allocation (OCBA) framework, which addresses the problem of efficient sequential allocation of simulation resources \cite{Chen2000}. This work on OCBA can also be regarded as an early precursor to results in online ranking and selection, and pure exploration bandits \cite[\S 33.5]{Lattimore2020}. \\

In this paper, we investigate a secretary-like problem that is motivated by simulation-based optimisation. Specifically, we consider contexts where noisy offline simulations are relatively cheap/plentiful compared to `true' evaluations of performance, which are relatively expensive/limited. This may be represented, for example, by the learning of a robotic controller that is primarily outsourced to a computer simulation prior to physical deployment, under the \textit{sim-to-real} methodology \cite{Peng2018a}. To this end, we introduce the framework of an offline noisy secretary problem, and regard the probability of success as the probability in which \textbf{at least one selected candidate performs acceptably well}. In addition, we employ in our formulation the two cornerstones championed by OO: an ordinal horse-race selection rule with fixed selection size for multiple secretaries, and goal softening as another degree of freedom to control the probability of success. \\

We study the properties of the success probability in general settings. In particular, our formulation leads to a success probability which only depends on the \textit{bivariate copula} of the underlying distributions. Note that while copula modelling has been previously applied in stochastic simulation \cite{Biller2009}, it was used for a different purpose (namely, to model the time-series of simulation inputs). By keeping to the motivational context that the allowable number of offline evaluations is plentiful, we also address the \textit{inversion} problem, that is to determine a sufficiently large sample size of candidates, so that for fixed selection size, \textbf{a prescribed high probability of success is guaranteed}. \\

The paper is structured as follows. In Section \ref{sec:prelim}, we setup the technical premise of the paper, and define the success probability for the offline multiple noisy secretary problem. The inversion problem is also to be stated formally. In Section \ref{sec:OO_properties}, we provide an explicit integral expression for the success probability that only depends on the underlying copula, and use this to illustrate various properties of the success probability for general copula models. In Section \ref{sec:gaussian_copula_ordinal_optimisation}, the success probability is specialised to the case of Gaussian copula models, in which we provide an analytic lower bound for the success probability. This analytic lower bound is then used to address the inversion problem, where we demonstrate finding sufficiently large sample sizes to guarantee a high probability of success. Concluding remarks are given and future work is proposed in Section \ref{sec:conclusion}.

\section{Preliminaries}
\label{sec:prelim}

\subsection{Notation}
\label{sec:notation}
The set $\mathbb{R}$ denotes the real numbers, and $\mathbb{N}$ denotes the set of natural numbers. The logarithm $\log\left(\cdot\right)$ is taken to mean the natural logarithm, while $\cot\left(\cdot\right)$ and $\sinh\left(\cdot\right)$ are the cotangent and hyperbolic sine functions respecrively. The floor and ceiling operators are given by $\left\lfloor\cdot\right\rfloor$ and $\left\lceil\cdot\right\rceil$ respectively. Whenever the symbols $\leq$ and $<$ are used between vectors, they refer to element-wise inequalities. The standard Gaussian probability density function (PDF), cumulative distribution function (CDF), inverse CDF (i.e. quantile function) and $Q$-function (complementary CDF) are canonically represented using $\phi\left(\cdot\right)$, $\Phi\left(\cdot\right)$, $\Phi^{-1}\left(\cdot\right)$, $Q\left(\cdot\right)$ respectively. We write $\mathbf{X} \sim \mathcal{N}\left(\boldsymbol{\mu}, C\right)$ to denote that $\mathbf{X}$ is Gaussian distributed with mean $\boldsymbol{\mu}$ and covariance $C$. The probability of an event is measured by $\operatorname{Pr}\left(\cdot\right)$ with respect to a probability space that is clear from context. The abbreviation i.i.d. stands for mutually independent and identically distributed. The symbol $\mathbb{E}$ denotes mathematical expectation. We use $\mathbb{I}_{\left\{\cdot\right\}}$ to denote an indicator variable. The notation $\left[X \middle| Y = y\right]$ is understood to mean a random variable that is equal in law to the conditional distribution of $X$ given $Y = y$. Following the notation of \cite{Shaked2007}, stochastic dominance is denoted by $\underset{\mathrm{st}}{\preceq}$ and the symbol $\underset{\mathrm{st}}{=}$ between random elements denotes equality in law. The $k$\textsuperscript{th} order statistic of a i.i.d. sample of size $n$ from parent distribution $Z$ will be denoted by $Z_{k:n}$. 

\subsection{Setup}

Let the pair $\left(Z, X\right)$ have a continuous distribution with joint CDF $F_{Z,X}\left(z, x\right)$ and respective marginal CDFs $F_{Z}\left(z\right)$, $F_{X}\left(x\right)$. Consider $n$ i.i.d. copies $\left(Z_{i}, X_{i}\right)$ drawn from the distribution of $\left(Z, X\right)$. The variables $Z_{1}, \dots, Z_{n}$ (i.e. noisily observed candidate qualities) are ordered from best to worst, denoted by $Z_{1:n} \leq \dots \leq Z_{n:n}$. The best $m$ (with $m \leq n$) are selected, given by $Z_{1:n} , \dots, Z_{m:n}$, with respectively attached $X$-values (i.e. actual candidate qualities) denoted as $X_{\left\langle 1\right\rangle}, \dots, X_{\left\langle m\right\rangle}$. More explicitly, the pairs $\left(Z_{1:n}, X_{\left\langle 1\right\rangle}\right), \dots, \left(Z_{m:n}, X_{\left\langle m\right\rangle}\right)$ have been chosen. For this offline multiple noisy secretary problem, we define the corresponding success probability as follows.

\begin{definition}[Success probability]
The success probability $p_{\mathrm{success}}$ is the probability that \textbf{at least one} of $X_{\left\langle 1\right\rangle}, \dots, X_{\left\langle m\right\rangle}$ is in the best $100\alpha$ percentile of the population of $X$, and denoted
\begin{align}
p_{\mathrm{success}}^{F_{Z,X}}\left(n, m, \alpha\right) &:= \operatorname{Pr}\left(\bigcup_{i = 1}^{m}\left\{X_{\left\langle i\right\rangle} \leq x_{\alpha}^{*}\right\}\right) \label{eq:ord-opt-defn} \\
&= \operatorname{Pr}\left( \min_{i \in \left\{1, \dots, m\right\}}X_{\left\langle i\right\rangle} \leq x_{\alpha}^{*}\right),
\end{align}
where $x_{\alpha}^{*}$ with $\alpha \in \left(0, 1\right]$ is such that $\operatorname{Pr}\left(X \leq x_{\alpha}^{*}\right) = \alpha$.
\label{def:ord_opt_general_copula}
\end{definition}

This is an offline variant of the secretary problem, because the selection is made after all the $Z_{1}, \dots, Z_{n}$ are observed. Also, the classical secretary problem is trivial if selections can be made offline, so one purpose of having noisy observations is to `un-trivialise' the problem. Painting this setup in a simulation-based optimisation context, we could for example take each $Z_{i}$ as the empirical average $Z_{i} = \frac{1}{T}\sum_{t=1}^{T}J\left(\theta_{i}, W_{t}\right)$ over $T$ simulation replications, where $J\left(\theta_{i}, W_{t}\right)$ is a system performance function of a randomly generated design parameter $\theta_{i}$, and the $W_{t}$ are stochastic simulation input variables. Naturally, $Z_{i}$ is then a noisy estimate of the actual expectation $X_{i} = \mathbb{E}_{W}\left[J\left(\theta_{i}, W\right)\middle|\theta_{i}\right]$. In essence, simulating a candidate solution is akin to conducting an `interview' for it. Hence Definition \ref{def:ord_opt_general_copula} is compatible, provided the random variables are continuous. \\

A noisy secretary problem was also studied by \cite{Krieger2012}, but instead considered the probability that the best out of $X_{1}, \dots, X_{n}$ was selected. It was demonstrated that, even in the offline setting, there exist distributions whereby this probability tends to zero as $n \to \infty$. In this paper, the role of $\alpha$ is for goal-softening, i.e. controlling degree of sub-optimality as defined by a threshold of the best $100\alpha$ percentile of $X$. The results for our formulation imply that $\alpha$ can be controlled to make $p_{\mathrm{success}}$ arbitrarily close to one, while $p_{\mathrm{success}} > 0$ always when $\alpha \in \left(0, 1\right]$. \\

We also focus attention on the problem of inverting the success probability, i.e. finding a sample size $n$ so that $p_{\mathrm{success}}$ is sufficiently high.
\begin{problem}[High probability inversion]
Prescribe $\delta \in \left(0, 1\right]$, and fix $m \in \mathbb{N}$, $\alpha \in \left(0, 1\right]$. Given the distribution for $\left(Z, X\right)$:
\begin{enumerate}[label=(\alph*)]
\item does there exist a finite $n$ which yields $p_{\mathrm{success}}^{F_{Z,X}}\left(n, m, \alpha\right) \geq 1 - \delta$? \label{prob:inversion_existence}
\item If so, find such an $n$. \label{prob:inversion_n}
\end{enumerate}
\label{prob:inversion}
\end{problem}
In the context of simulation-based optimisation, $n$ represents the quantity of simulations performed offline. Thus Problem \ref{prob:inversion} could be phrased as "find the number of offline simulations, so that at least one candidate actually performs acceptably well, with high probability".

\subsection{Dependence Modelling}

Since $Z$ is a noisily corrupted version of $X$, a reasonable condition that might be desired on $\left(Z, X\right)$ is some notion of positive dependence, i.e. roughly speaking, low (high) $Z$ is predictive of low (high) $X$. One such formal notion of positive dependence is called stochastically increasing positive dependence (SIPD), defined as follows \cite[\S 2.8.2]{Joe2014}.

\begin{definition}[Stochastically increasing positive dependence]
The random variable $X$ is said to be stochastically increasing positive dependent in $Z$ if
\begin{equation}
\operatorname{Pr}\left(X > x\middle|Z = z\right) \leq \operatorname{Pr}\left(X > x\middle|Z = z'\right)
\label{eq:sipd}
\end{equation}
for all $z, z'$ in the support of $Z$, such that $z \leq z'$.
\label{assump:sipd}
\end{definition}

The SIPD condition may be satisfied, for instance, under a particular additive noise causal representation. Additive noise is the same type of noise considered in \cite{Krieger2012} and areas of stochastic simulation such as \cite{Ho1992, Xie2014}. Further elaboration is provided in Appendix \ref{sec:additive_noise}. \\

Due to their utility in modelling dependence structure and flexibility to describe wide classes of distributions, copulas will be convenient for modelling the pair $\left(Z, X\right)$.

\begin{definition}[Bivariate copula]
A bivariate copula is a bivariate distribution, with both marginal distributions being the $\operatorname{Uniform}\left(0, 1\right)$ distribution.
\end{definition}

Through the \textit{probability integral transform}, which says that both $F_{Z}\left(Z\right)$ and $F_{X}\left(X\right)$ are $\operatorname{Uniform}\left(0, 1\right)$ distributed \cite[Theorem 1]{Angus1994}, every multivariate distribution can be represented with just its marginal distributions and a copula. Moreover, Sklar's theorem \cite[Theorem 1.1]{Joe2014} asserts that if the distribution is continuous, then the choice of copula in this representation is unique. If the continuous pair $\left(Z, X\right)$ is represented with copula $\mathcal{C}_{Z, X}$, then we say that $\left(Z, X\right)$ `has' copula $\mathcal{C}_{Z, X}$ and write the joint copula CDF as
\begin{equation}
\mathcal{C}_{Z, X}\left(z, x\right) := \operatorname{Pr}\left(F_{Z}\left(Z\right) \leq z, F_{X}\left(X\right) \leq x\right).
\end{equation}
In addition, the conditional copula CDF of $X$ given $Z$ is denoted by
\begin{equation}
\mathcal{C}_{X|Z}\left(x\middle|z\right) := \operatorname{Pr}\left(F_{X}\left(X\right) \leq x\middle|F_{Z}\left(Z\right) = z\right).
\end{equation}

\section{Properties of Success Probability}
\label{sec:OO_properties}

In this section, we present several properties of the success probability when $\left(Z, X\right)$ has a general copula model. Our first result gives an expression to compute the success probability by evaluating an $m$-dimensional integral.

\begin{theorem}[Integral expression for success probability]
We have
\begin{multline}
p_{\mathrm{success}}^{F_{Z,X}}\left(n,m,\alpha\right) =\dfrac{n!}{\left(n-m\right)!}\int_{0}^{1}\int_{0}^{z_{m}}\dots\int_{0}^{z_{2}}\left[1-\prod_{j=1}^{m}\left(1-\mathcal{C}_{X|Z}\left(\alpha\middle|z_{j}\right)\right)\right] \\
\times \left(1-z_{m}\right)^{n-m}\mathrm{d}z_{1}\dots \mathrm{d}z_{m-1}\mathrm{d}z_{m}.
\label{eq:p_success_m-dimensional_integral}
\end{multline}
\label{thm:p_success_m-dimensional_integral}
\end{theorem}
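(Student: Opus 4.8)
The plan is to reduce the event $\bigcup_{i=1}^m \{X_{\langle i\rangle}\le x_\alpha^*\}$ to a statement about the uniform-transformed variables $U_i := F_Z(Z_i)$ and $V_i := F_X(X_i)$, condition on the order statistics of the noisy scores, and integrate. First I would apply the probability integral transform: since $F_Z$ is increasing and continuous, ordering the $Z_i$ is the same as ordering the $U_i$, so $Z_{k:n}$ corresponds to $U_{k:n}$ and the attached value $X_{\langle i\rangle}$ satisfies $F_X(X_{\langle i\rangle}) = V_{\langle i\rangle}$, the $V$-value attached to the $i$-th smallest $U$. Also $\{X_{\langle i\rangle}\le x_\alpha^*\} = \{V_{\langle i\rangle}\le \alpha\}$ because $F_X$ is continuous and $F_X(x_\alpha^*)=\alpha$. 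So $p_{\mathrm{success}} = \operatorname{Pr}\big(\bigcup_{i=1}^m\{V_{\langle i\rangle}\le\alpha\}\big)$, an event depending only on the copula of $(Z,X)$, namely the joint law of $(U,V)$.

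Next I would condition on the vector of the $m$ smallest order statistics $(U_{1:n},\dots,U_{m:n}) = (z_1,\dots,z_m)$ with $0<z_1<\dots<z_m<1$. The joint density of these $m$ order statistics of an i.i.d. $\operatorname{Uniform}(0,1)$ sample is the standard formula $\frac{n!}{(n-m)!}(1-z_m)^{n-m}$ on the simplex $0<z_1<\dots<z_m<1$ (obtained from the full order-statistic density $n!$ on $0<z_1<\dots<z_n<1$ by integrating out $z_{m+1},\dots,z_n$ over $z_m<z_{m+1}<\dots<z_n<1$, which yields $\frac{(1-z_m)^{n-m}}{(n-m)!}$). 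Given these values, the attached $V$-values $V_{\langle 1\rangle},\dots,V_{\langle m\rangle}$ are conditionally independent, with $V_{\langle j\rangle}$ distributed as $[\,V \mid U = z_j\,]$, since the pairs are i.i.d. and the $X$-value attached to a selected candidate depends only on that candidate's own $U$ (the selection is a function of the $U$-coordinates alone, so conditioning on which candidates are selected and on their $U$-values leaves the $V$-values with their original conditional laws, mutually independent across candidates). Hence, conditionally,
\[
\operatorname{Pr}\!\left(\bigcup_{j=1}^m\{V_{\langle j\rangle}\le\alpha\}\,\middle|\, z_1,\dots,z_m\right) = 1 - \prod_{j=1}^m\operatorname{Pr}(V_{\langle j\rangle} > \alpha) = 1 - \prod_{j=1}^m\bigl(1-\mathcal{C}_{X|Z}(\alpha\mid z_j)\bigr),
\]
using $\operatorname{Pr}(V\le\alpha\mid U=z_j) = \mathcal{C}_{X|Z}(\alpha\mid z_j)$ by definition of the conditional copula CDF.

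Finally I would integrate this conditional probability against the order-statistic density over the simplex $\{0<z_1<\dots<z_m<1\}$, written as the iterated integral $\int_0^1\int_0^{z_m}\cdots\int_0^{z_2}(\cdot)\,dz_1\cdots dz_{m-1}dz_m$, which reproduces \eqref{eq:p_success_m-dimensional_integral}.

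The main obstacle, and the step deserving the most care, is the conditional-independence claim: that given the selection outcome and the $U$-values $(z_1,\dots,z_m)$ of the selected candidates, the attached $V$-values are independent with laws $[\,V\mid U=z_j\,]$. This needs the observation that the selection rule (rank by $Z$, equivalently by $U$) is measurable with respect to the $U$-coordinates alone, so it is conditionally independent of the $V$-coordinates given all the $U$'s; combined with the i.i.d.\ structure across the $n$ pairs, this gives that the joint conditional law of $(V_{\langle 1\rangle},\dots,V_{\langle m\rangle})$ given $(U_{1:n},\dots,U_{m:n})=(z_1,\dots,z_m)$ is the product $\prod_{j=1}^m [\,V\mid U=z_j\,]$. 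One should also note the almost-sure distinctness of the $Z_i$ (hence a well-defined strict ordering) which holds because the distribution is continuous, so ties occur with probability zero. Everything else is the routine order-statistic density computation and Fubini.
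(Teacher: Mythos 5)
Your proposal is correct and follows essentially the same route as the paper's proof: reduce to the copula via the probability integral transform, condition on the first $m$ uniform order statistics with joint density $\frac{n!}{(n-m)!}(1-z_m)^{n-m}$ on the simplex, use conditional independence of the attached values given their own $Z$-order statistics to get the product form $1-\prod_{j=1}^{m}\bigl(1-\mathcal{C}_{X|Z}(\alpha\mid z_j)\bigr)$, and integrate. Your explicit attention to why the conditional-independence step holds (the selection rule is measurable with respect to the $U$-coordinates alone) is a slightly more careful articulation of what the paper states directly, but it is the same argument.
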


The proof may be found in Appendix \ref{sec:proofs}. Theorem \ref{thm:p_success_m-dimensional_integral} reveals that when calculating the success probability given $n$, $m$ and $\alpha$, only the copula of $\left(Z, X\right)$ matters. Or alternatively, we can without loss of generality assume that $\left(Z, X\right)$ is a copula distribution, and write $p_{\mathrm{success}}^{F_{Z,X}} = p_{\mathrm{success}}^{\mathcal{C}_{Z,X}}$. Therefore, we see why the "ordinal" term from OO can be a fitting qualifier - the success probability will be invariant to univariate monotonic (i.e. strictly increasing) transformations of $Z$ and $X$, because this does not modify the underlying copula. \\

The $m$-dimensional integral in \eqref{eq:p_success_m-dimensional_integral} is analytically intractable, except for simple special cases exemplified as follows. For the bivariate Clayton copula \cite[\S 4.6.1]{Joe2014} with parameter $\varrho = 1$ (denoted $\operatorname{Clay}\left(\varrho\right)$), and $n = 3$, $m = 1$, $\alpha = 1/2$, we have
\begin{equation}
p_{\mathrm{success}}^{\operatorname{Clay}\left(1\right)}\left(3, 1, \dfrac{1}{2}\right) = 3\int_{0}^{1} \dfrac{\left(1-z\right)^{2}}{\left(z+1\right)^{2}} \mathrm{d} z = 9 - 12\log 2 \approx 0.6822.
\end{equation}
Or using $\operatorname{Clay}\left(2\right)$ instead, we have
\begin{equation}
p_{\mathrm{success}}^{\operatorname{Clay}\left(2\right)}\left(3, 1, \dfrac{1}{2}\right) = 3\int_{0}^{1} \dfrac{\left(1-z\right)^{2}}{\left(3 z^{2}+1\right)^{3 / 2}} \mathrm{d} z = \dfrac{\sinh ^{-1}\left(\sqrt{3}\right)}{\sqrt{3}} \approx 0.7603.
\end{equation}
Note that it is also possible to consider an alternative selection rule - which is to instead select all the candidates below the threshold $z_{m/n}^{*}$, where $z_{m/n}^{*}$ is the $100m/n$ percentile of $Z$. This selection rule gives a randomised selection size equal to $m$ in expectation, and it can be shown that the analogous success probability is the somewhat more `elegant-looking' expression of $1 - \left(1 - C_{Z,X}\left(m/n,\alpha\right)\right)^{n}$. In comparison to a fixed selection size however, this randomised selection size suffers from three issues: 1) if the marginal distribution of $Z$ is not known in practice, then this selection rule is non-operational; 2) a randomised selection size yields a strictly smaller asymptotic success probability; and 3) in applications (e.g. simulation-based optimisation), it may be undesirable to have the selection size be randomised, especially as the selection size is able to be zero with non-zero probability, and has an asymptotic variance of $m$. Further details are provided in Appendix \ref{sec:randomised_selection_size}.\\

Under a SIPD regularity condition, we can also confirm that the horse-race selection rule (i.e. selecting the best $m$ observed) is indeed the optimal choice.

\begin{proposition}[Optimality of horse-race selection]
If $X$ is SIPD in $Z$, then the selection of the first $m$ order statistics maximises the success probability in Definition \ref{def:ord_opt_general_copula}, compared to any other selection of size $m$ from the sample.
\label{thm:optimality_horse_race}
\end{proposition}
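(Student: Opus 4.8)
The plan is to fix an arbitrary selection of size $m$ from the sample, show that any such selection can be transformed into the horse-race selection by a sequence of single-element swaps (each replacing a selected index by a lower-ranked one), and verify that each swap does not decrease the success probability. Since the success probability depends only on which order-statistic ranks of $Z$ are selected — by exchangeability of the i.i.d. pairs $(Z_i, X_i)$ — a selection rule is equivalently described by a subset $S \subseteq \{1, \dots, n\}$ of ranks, where selecting rank $k$ means keeping the pair $(Z_{k:n}, X_{[k]})$ with $X_{[k]}$ the concomitant of the $k$th order statistic. The horse-race selection corresponds to $S = \{1, \dots, m\}$, the $m$ smallest ranks. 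Because any $S$ of size $m$ that is not $\{1, \dots, m\}$ contains some rank $k$ with a strictly smaller rank $k' < k$ not in $S$, it suffices to prove: replacing $k$ by $k'$ in $S$ cannot decrease $p_{\mathrm{success}}$.

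For the single-swap step, I would condition on all the order statistics $Z_{1:n} = z_1 \le \dots \le z_n$. Given these values, the concomitants $X_{[1]}, \dots, X_{[n]}$ are conditionally independent, with $X_{[j]}$ having the conditional law $[X \mid Z = z_j]$. The event of failure for selection $S$ is $\bigcap_{j \in S}\{X_{[j]} > x_\alpha^*\}$, whose conditional probability factorises as $\prod_{j \in S} \operatorname{Pr}(X > x_\alpha^* \mid Z = z_j)$. Comparing $S$ with the swapped set $S' = (S \setminus \{k\}) \cup \{k'\}$, all factors with index in $S \cap S'$ cancel, and we are left comparing $\operatorname{Pr}(X > x_\alpha^* \mid Z = z_k)$ against $\operatorname{Pr}(X > x_\alpha^* \mid Z = z_{k'})$. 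Since $k' < k$ implies $z_{k'} \le z_k$, the SIPD condition \eqref{eq:sipd} gives $\operatorname{Pr}(X > x_\alpha^* \mid Z = z_{k'}) \le \operatorname{Pr}(X > x_\alpha^* \mid Z = z_k)$, so the conditional failure probability for $S'$ is at most that for $S$. Taking expectations over the order statistics preserves the inequality, hence $p_{\mathrm{success}}(S') \ge p_{\mathrm{success}}(S)$.

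Iterating the swap — at each stage picking the largest rank in the current $S$ that exceeds some omitted smaller rank and swapping it down — strictly decreases (for instance) the sum of ranks in $S$ while keeping $|S| = m$, so the process terminates after finitely many steps at $S = \{1, \dots, m\}$, with the success probability non-decreasing throughout. This establishes that the horse-race selection is optimal.

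The main obstacle, and the step requiring the most care, is the conditional-independence and concomitant structure underpinning the factorisation: one must justify that, conditional on the vector of order statistics $(Z_{1:n}, \dots, Z_{n:n})$, the concomitants are independent with $X_{[j]} \stackrel{\mathrm{st}}{=} [X \mid Z = z_j]$. This is a standard fact about concomitants of order statistics for i.i.d. bivariate samples, and indeed it is precisely the mechanism already exploited in the proof of Theorem \ref{thm:p_success_m-dimensional_integral} (the product $\prod_{j=1}^m (1 - \mathcal{C}_{X|Z}(\alpha \mid z_j))$ there is exactly this conditional failure probability for the horse-race selection). Beyond invoking it cleanly, the remaining arguments — exchangeability reducing a selection to a rank-subset, and the telescoping cancellation of common factors under a swap — are routine.
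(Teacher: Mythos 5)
Your proposal is correct and follows essentially the same route as the paper: both arguments rest on the conditional factorisation of the failure probability over the selected ranks (via conditional independence of the concomitants given the order statistics), the SIPD-induced monotonicity of each factor $\operatorname{Pr}\left(X > x_{\alpha}^{*} \middle| Z = z\right)$ in $z$, and the pointwise domination of the first $m$ order statistics by any other size-$m$ selection. The only cosmetic difference is that the paper packages the comparison as a single multivariate stochastic-dominance step (citing the standard coupling characterisation), whereas you realise the same comparison through an explicit chain of single-rank swaps; the underlying mechanism is identical.
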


The success probability can also be shown to be non-decreasing in each of its arguments.

\begin{proposition}[Monotonicity of success probability]
The success probability $p_{\mathrm{success}}^{\mathcal{C}_{Z,X}}$ satisfies the following monotonicity properties.
\begin{enumerate}[label=(\alph*)]
\item (Monotonicity in $n$) If $X$ is SIPD in $Z$, then for any $n \in \mathbb{N}$ and fixed $\bar{m} \in \left\{1, \dots, n\right\}$ and fixed $\bar{\alpha} \in \left(0, 1\right]$, we have
\begin{equation}
p_{\mathrm{success}}^{\mathcal{C}_{Z,X}}\left(n, \bar{m}, \bar{\alpha}\right) \leq p_{\mathrm{success}}^{\mathcal{C}_{Z,X}}\left(n + 1, \bar{m}, \bar{\alpha}\right).
\end{equation}
\label{thm:monotonicity_OO_n}
\item (Monotonicity in $m$) For fixed $\bar{n} \in \mathbb{N}$, fixed $\bar{\alpha} \in \left(0, 1\right]$ and any $m \in \left\{1, \dots, n-1\right\}$, we have
\begin{equation}
p_{\mathrm{success}}^{\mathcal{C}_{Z,X}}\left(\bar{n}, m, \bar{\alpha}\right) \leq p_{\mathrm{success}}^{\mathcal{C}_{Z,X}}\left(\bar{n}, m + 1, \bar{\alpha}\right).
\end{equation}
\label{thm:monotonicity_OO_m}
\item (Monotonicity in $\alpha$) For fixed $\bar{n} \in \mathbb{N}$, fixed $\bar{m} \in \left\{1, \dots, \bar{n}\right\}$ and any $\alpha, \alpha' \in \left(0, 1\right]$ such that $\alpha \leq \alpha'$, we have
\begin{equation}
p_{\mathrm{success}}^{\mathcal{C}_{Z,X}}\left(\bar{n}, \bar{m}, \alpha\right) \leq p_{\mathrm{success}}^{\mathcal{C}_{Z,X}}\left(\bar{n}, \bar{m}, \alpha'\right).
\end{equation}
\label{thm:monotonicity_OO_alpha}
\end{enumerate}
\label{thm:monotonicity_OO}
\end{proposition}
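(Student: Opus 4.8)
The plan is to obtain parts \ref{thm:monotonicity_OO_m} and \ref{thm:monotonicity_OO_alpha} essentially for free from Definition \ref{def:ord_opt_general_copula}, and to do the genuine work in part \ref{thm:monotonicity_OO_n}. For part \ref{thm:monotonicity_OO_m}, with $\bar n$ and the realised sample held fixed, the best $m$ order statistics of $Z$ form a sub-list of the best $m+1$, so the pairs selected when the selection size is $m$ are a sub-collection of those selected when it is $m+1$; hence $\bigcup_{i=1}^{m}\{X_{\langle i\rangle}\le x_{\bar\alpha}^{*}\}\subseteq\bigcup_{i=1}^{m+1}\{X_{\langle i\rangle}\le x_{\bar\alpha}^{*}\}$, and monotonicity of probability under set inclusion gives the claim. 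For part \ref{thm:monotonicity_OO_alpha}, the quantile $x_{\alpha}^{*}$ is non-decreasing in $\alpha$, so $\alpha\le\alpha'$ forces $\{X_{\langle i\rangle}\le x_{\alpha}^{*}\}\subseteq\{X_{\langle i\rangle}\le x_{\alpha'}^{*}\}$ for each $i$; unioning over $i\in\{1,\dots,\bar m\}$ and taking probabilities finishes it. Neither argument uses SIPD, consistent with the statement. (One can also read part \ref{thm:monotonicity_OO_alpha} off \eqref{eq:p_success_m-dimensional_integral}, since $\mathcal{C}_{X|Z}(\alpha\mid z_j)$ is a conditional CDF in $\alpha$ while the remaining factors in the integrand are non-negative and independent of $\alpha$.)

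For part \ref{thm:monotonicity_OO_n} I would first pass to the copula scale: by Theorem \ref{thm:p_success_m-dimensional_integral} and the remark following it we may assume $\left(Z,X\right)$ has $\operatorname{Uniform}(0,1)$ marginals, so $x_{\bar\alpha}^{*}=\bar\alpha$, and I would record that SIPD, being a property of the copula, now says that $g(u):=1-\mathcal{C}_{X|Z}(\bar\alpha\mid u)=\operatorname{Pr}(X>\bar\alpha\mid Z=u)$ is non-decreasing on $(0,1)$. Then I would couple the two sample sizes: take $n+1$ i.i.d. pairs $(Z_1,X_1),\dots,(Z_{n+1},X_{n+1})$, let $S_n\subseteq\{1,\dots,n\}$ and $S_{n+1}\subseteq\{1,\dots,n+1\}$ be the index sets of the $\bar m$ smallest $Z$-values among the first $n$ and among all $n+1$ samples, so that the two success events are $\bigcup_{i\in S_n}\{X_i\le\bar\alpha\}$ and $\bigcup_{i\in S_{n+1}}\{X_i\le\bar\alpha\}$. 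Since the $Z_i$ are a.s. distinct, a short case split shows that either $S_{n+1}=S_n$ (namely when $Z_{n+1}$ exceeds the $\bar m$-th smallest of $Z_1,\dots,Z_n$), or $S_{n+1}$ is obtained from $S_n$ by deleting its worst member $j^{*}=\argmax_{i\in S_n}Z_i$ and inserting $n+1$, in which case necessarily $Z_{n+1}<Z_{j^{*}}$.

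To conclude I would condition on $(Z_1,\dots,Z_{n+1})$: because the pairs are i.i.d., the $X_i$ are then conditionally independent with $\operatorname{Pr}(X_i>\bar\alpha\mid Z_1,\dots,Z_{n+1})=g(Z_i)$, so the conditional probabilities of failure are $\prod_{i\in S_n}g(Z_i)$ and $\prod_{i\in S_{n+1}}g(Z_i)$. In the first branch of the dichotomy these agree; in the second they differ only through the factor $g(Z_{n+1})$ versus $g(Z_{j^{*}})$, and $Z_{n+1}<Z_{j^{*}}$ together with $g$ non-decreasing gives $\prod_{i\in S_{n+1}}g(Z_i)\le\prod_{i\in S_n}g(Z_i)$. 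Thus the conditional success probability with $n+1$ samples dominates that with $n$ samples pointwise in $(Z_1,\dots,Z_{n+1})$, and taking expectations yields $p_{\mathrm{success}}^{\mathcal{C}_{Z,X}}(n,\bar m,\bar\alpha)\le p_{\mathrm{success}}^{\mathcal{C}_{Z,X}}(n+1,\bar m,\bar\alpha)$. The main obstacle is exactly part \ref{thm:monotonicity_OO_n}: identifying the precise effect of one extra sample on the selected set (the single worst-for-newcomer swap) and pinpointing that this is the one place the SIPD hypothesis is unavoidable, since without positive dependence the swapped-in candidate need not be more likely to be acceptable. A coupling-free alternative, which I would note in passing, is to rewrite \eqref{eq:p_success_m-dimensional_integral} as $\mathbb{E}[\psi(Z_{\bar m:n})]$ for a function $\psi$ that is non-increasing under SIPD, and then invoke the standard stochastic ordering $Z_{\bar m:n+1}\underset{\mathrm{st}}{\preceq}Z_{\bar m:n}$ of uniform order statistics.
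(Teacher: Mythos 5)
Your proofs of parts \ref{thm:monotonicity_OO_m} and \ref{thm:monotonicity_OO_alpha} are correct and essentially identical to the paper's (the paper phrases \ref{thm:monotonicity_OO_m} via the minimum over a larger set and \ref{thm:monotonicity_OO_alpha} via De Morgan's laws, but the content is the same nesting of events). For part \ref{thm:monotonicity_OO_n} your argument is correct but takes a genuinely different, more hands-on route. The paper first proves a standalone lemma: coupling the $(n+1)$-sample with the $n$-sample by deleting one element uniformly at random, it establishes the multivariate stochastic dominance $\left(Z_{1:n+1},\dots,Z_{\bar m:n+1}\right)\underset{\mathrm{st}}{\preceq}\left(Z_{1:n},\dots,Z_{\bar m:n}\right)$, and then writes $p_{\mathrm{success}}=1-\mathbb{E}\bigl[\prod_{j=1}^{\bar m}\operatorname{Pr}\left(X>\bar\alpha\middle|Z=Z_{j:n}\right)\bigr]$ and invokes the characterisation of stochastic dominance via expectations of componentwise non-decreasing functions (as in the proof of Proposition \ref{thm:optimality_horse_race}), SIPD supplying the monotonicity of the product. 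You instead couple by restricting the $(n+1)$-sample to its first $n$ entries, identify exactly how the selected index set changes (either not at all, or by a single swap of the worst selected candidate for the newcomer, who has a strictly smaller $Z$-value), and compare the conditional failure products factor by factor. Your version avoids the multivariate stochastic-order machinery entirely and makes transparent where SIPD is used; the paper's version buys a reusable lemma and a uniform template shared with Proposition \ref{thm:optimality_horse_race}. One small caveat: your closing aside about rewriting \eqref{eq:p_success_m-dimensional_integral} as $\mathbb{E}\left[\psi\left(Z_{\bar m:n}\right)\right]$ and using only the univariate ordering $Z_{\bar m:n+1}\underset{\mathrm{st}}{\preceq}Z_{\bar m:n}$ is not quite right for $\bar m>1$, since the integrand depends on the full vector of the first $\bar m$ order statistics; the correct coupling-free shortcut needs the multivariate dominance that the paper's lemma provides. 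This does not affect your main argument, which is complete.
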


For any copula satisfying the SIPD condition, we also have the following general upper and lower bounds.

\begin{proposition}[General bounds for success probability]
If $X$ is SIPD in $Z$, the success probability $p_{\mathrm{success}}^{\mathcal{C}_{Z,X}}$ satisfies
\begin{equation}
1 - \left(1 - \alpha\right)^{m} \leq p_{\mathrm{success}}^{\mathcal{C}_{Z,X}}\left(n, m, \alpha\right) \leq 1 - \left(1 - \alpha\right)^{n}.
\label{eq:general_bounds_OO}
\end{equation}
\label{thm:general_bounds_OO}
\end{proposition}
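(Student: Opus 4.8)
The plan is to obtain both inequalities from the monotonicity properties already recorded in Proposition~\ref{thm:monotonicity_OO}, by reducing to the boundary case $m=n$, where the success probability admits a closed form. The key preliminary fact I would establish is that for every $k\in\mathbb{N}$,
\[
p_{\mathrm{success}}^{\mathcal{C}_{Z,X}}\!\left(k,k,\alpha\right)=1-\left(1-\alpha\right)^{k}.
\]
Indeed, when $m=n=k$ no candidate is discarded, so the concomitants $X_{\left\langle 1\right\rangle},\dots,X_{\left\langle k\right\rangle}$ are just a permutation of $X_{1},\dots,X_{k}$; hence $\min_{i}X_{\left\langle i\right\rangle}=\min_{i}X_{i}=X_{1:k}$, and using independence together with $\operatorname{Pr}\!\left(X\le x_{\alpha}^{*}\right)=\alpha$,
\[
p_{\mathrm{success}}^{\mathcal{C}_{Z,X}}\!\left(k,k,\alpha\right)=\operatorname{Pr}\!\left(X_{1:k}\le x_{\alpha}^{*}\right)=1-\operatorname{Pr}\!\left(X>x_{\alpha}^{*}\right)^{k}=1-\left(1-\alpha\right)^{k}.
\]
One could equivalently verify this by setting $m=n$ in Theorem~\ref{thm:p_success_m-dimensional_integral}: the factor $\left(1-z_{n}\right)^{n-m}$ becomes $1$, the bracketed integrand is symmetric in $z_{1},\dots,z_{n}$, so the prefactor $n!$ turns the ordered-simplex integral into one over $\left[0,1\right]^{n}$, which factorises once one notes $\int_{0}^{1}\mathcal{C}_{X|Z}\!\left(\alpha\mid z\right)\mathrm{d}z=\alpha$.

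With this in hand, the upper bound follows by iterating the monotonicity-in-$m$ statement of Proposition~\ref{thm:monotonicity_OO} (which, notably, does not require SIPD):
\[
p_{\mathrm{success}}^{\mathcal{C}_{Z,X}}\!\left(n,m,\alpha\right)\le p_{\mathrm{success}}^{\mathcal{C}_{Z,X}}\!\left(n,m+1,\alpha\right)\le\dots\le p_{\mathrm{success}}^{\mathcal{C}_{Z,X}}\!\left(n,n,\alpha\right)=1-\left(1-\alpha\right)^{n}.
\]
The lower bound follows by iterating the monotonicity-in-$n$ statement of Proposition~\ref{thm:monotonicity_OO} --- this is where the SIPD hypothesis is used --- decreasing the sample size from $n$ down to $m$:
\[
p_{\mathrm{success}}^{\mathcal{C}_{Z,X}}\!\left(n,m,\alpha\right)\ge p_{\mathrm{success}}^{\mathcal{C}_{Z,X}}\!\left(n-1,m,\alpha\right)\ge\dots\ge p_{\mathrm{success}}^{\mathcal{C}_{Z,X}}\!\left(m,m,\alpha\right)=1-\left(1-\alpha\right)^{m}.
\]
At each step the selection size $m$ still lies between $1$ and the running sample size (which never drops below $m$), so every application of the lemma is legitimate.

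I do not anticipate a serious obstacle; the only points needing care are confirming the $m=n$ closed form --- cleanest via the permutation observation rather than the integral --- and checking that the hypotheses of the two parts of Proposition~\ref{thm:monotonicity_OO} are met along each iteration. If one instead wanted a self-contained proof of the lower bound that does not invoke the monotonicity-in-$n$ result, I would condition on the order statistics of $Z$ in copula scale, use that the attached concomitants are then conditionally independent with $\operatorname{Pr}\!\left(X>x_{\alpha}^{*}\mid Z=z\right)=1-\mathcal{C}_{X|Z}\!\left(\alpha\mid z\right)=:g(z)$ nondecreasing by SIPD, so that the failure probability equals $\mathbb{E}\!\left[\prod_{i=1}^{m}g\!\left(U_{i:n}\right)\right]$ with $U_{1:n}\le\dots\le U_{n:n}$ the order statistics of $n$ i.i.d.\ uniforms; one then couples the size-$n$ sample with its first $m$ points so that $U_{i:n}\le U_{i:m}$ almost surely for $i\le m$, and monotonicity of $g$ together with independence of the sub-sample give $\mathbb{E}\!\left[\prod_{i=1}^{m}g\!\left(U_{i:n}\right)\right]\le\mathbb{E}\!\left[\prod_{i=1}^{m}g\!\left(U_{i:m}\right)\right]=\left(1-\alpha\right)^{m}$, i.e.\ $p_{\mathrm{success}}^{\mathcal{C}_{Z,X}}\ge1-\left(1-\alpha\right)^{m}$.
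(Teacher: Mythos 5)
Your argument is correct, and it is a genuinely different organisation from the paper's. The paper proves both bounds by direct comparison: for the lower bound it compares the horse-race selection to a uniformly random selection of size $m$ (which is equal in law to the i.i.d.\ subsample $\left(Z_{1},\dots,Z_{m}\right)$, giving failure probability $\left(1-\alpha\right)^{m}$) and invokes the SIPD/stochastic-dominance argument from the proof of Proposition~\ref{thm:optimality_horse_race}; for the upper bound it uses $\min_{i\in\left\{1,\dots,n\right\}}X_{i}\leq\min\left\{X_{\left\langle 1\right\rangle},\dots,X_{\left\langle m\right\rangle}\right\}$ directly. You instead reduce everything to the boundary case $m=n$, which you establish independently via the permutation observation, and then iterate the two monotonicity statements of Proposition~\ref{thm:monotonicity_OO}. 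This is legitimate and non-circular: Proposition~\ref{thm:monotonicity_OO} is proved in the paper without reference to the general bounds, and although the paper derives the $m=n$ closed form (Proposition~\ref{thm:p_success_limiting}\ref{thm:p_success_limiting_m_equals_n}) \emph{from} Proposition~\ref{thm:general_bounds_OO}, you supply your own direct proof of that identity, so the logical dependency is inverted but sound. Your route buys economy (it reuses already-proven machinery, and correctly isolates that only the lower bound needs SIPD, via monotonicity in $n$), at the cost of making the bound's tightness less visible; the paper's route exhibits the two extremal comparison selections explicitly, which is what makes it clear that both bounds are attained (comonotonic and independent copulas respectively). Your optional self-contained coupling argument for the lower bound, with $U_{i:n}\leq U_{i:m}$ and the nondecreasing $g$, is essentially the paper's own mechanism restated, so nothing is missing there either.
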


These bounds are tight, as some of the following limiting cases show.

\begin{proposition}[Limiting forms of success probability]
If $X$ is SIPD in $Z$, the success probability $p_{\mathrm{success}}^{\mathcal{C}_{Z,X}}$ satisfies the following.
\begin{enumerate}[label=(\alph*)]
\item When $m = n$:
\begin{equation}
p_{\mathrm{success}}^{\mathcal{C}_{Z,X}}\left(n, n, \alpha\right) = 1 - \left(1 - \alpha\right)^{n}.
\end{equation}
\label{thm:p_success_limiting_m_equals_n}
\item When $\alpha = 1$:
\begin{equation}
p_{\mathrm{success}}^{\mathcal{C}_{Z,X}}\left(n, m, 1\right) = 1.
\end{equation}
\label{thm:p_success_limiting_alpha_1}
\item When $\alpha \to 0^{+}$:
\begin{equation}
\lim_{\alpha \to 0^{+}}p_{\mathrm{success}}^{\mathcal{C}_{Z,X}}\left(n, m, \alpha\right) = 0.
\end{equation}
\label{thm:p_success_limiting_alpha_to_0}
\item If $Z$ and $X$ are comonotonic (i.e. perfect positive dependence, such that $F_{Z}\left(Z\right) = F_{X}\left(X\right)$ for every realisation of $\left(Z, X\right)$), then
\begin{equation}
p_{\mathrm{success}}^{\mathcal{C}_{Z,X}}\left(n, m, \alpha\right) = 1 - \left(1 - \alpha\right)^{n}.
\end{equation}
\label{thm:p_success_limiting_comonotonic}
\item If $Z$ and $X$ are independent, then
\begin{equation}
p_{\mathrm{success}}^{\mathcal{C}_{Z,X}}\left(n, m, \alpha\right) = 1 - \left(1 - \alpha\right)^{m}.
\end{equation}
\label{thm:p_success_limiting_independent}
\item When $m \to \infty$ and $n \to \infty$ in any way such that $m \leq n$:
\begin{equation}
\lim_{m,n\to\infty}p_{\mathrm{success}}^{\mathcal{C}_{Z,X}}\left(n, m, \alpha\right) = 1.
\end{equation}
\label{thm:p_success_limiting_m_to_infty}
\item When $m$ is finite and $n \to \infty$:
\begin{equation}
\lim_{n\to\infty}p_{\mathrm{success}}^{\mathcal{C}_{Z,X}}\left(n, m, \alpha\right) = 1 - \left(1 - \mathcal{C}_{X|Z}\left(\alpha\middle|0\right)\right)^{m}.
\end{equation}
\label{thm:p_success_limiting_n_to_infty}
\end{enumerate}
\label{thm:p_success_limiting}
\end{proposition}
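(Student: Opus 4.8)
The plan is to organise the seven parts into three groups. Parts \ref{thm:p_success_limiting_m_equals_n}, \ref{thm:p_success_limiting_alpha_1}, \ref{thm:p_success_limiting_alpha_to_0} and \ref{thm:p_success_limiting_m_to_infty} I would read off directly from the general bounds of Proposition \ref{thm:general_bounds_OO}. In \eqref{eq:general_bounds_OO} the lower and upper bounds coincide when $m = n$ (both equal $1 - (1-\alpha)^n$), giving \ref{thm:p_success_limiting_m_equals_n}, and they coincide when $\alpha = 1$ (both equal $1$), giving \ref{thm:p_success_limiting_alpha_1}. For \ref{thm:p_success_limiting_alpha_to_0}, the upper bound $1-(1-\alpha)^n$ tends to $0$ as $\alpha\to 0^{+}$ while $p_{\mathrm{success}} \ge 0$; for \ref{thm:p_success_limiting_m_to_infty}, the lower bound $1-(1-\alpha)^m$ tends to $1$ as $m\to\infty$ (since $1-\alpha\in[0,1)$) while $p_{\mathrm{success}}\le 1$. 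A one-line alternative for \ref{thm:p_success_limiting_m_equals_n} is that $m=n$ forces $\{X_{\langle 1\rangle},\dots,X_{\langle n\rangle}\}=\{X_1,\dots,X_n\}$, so $p_{\mathrm{success}} = \operatorname{Pr}(\min_i X_i \le x_{\alpha}^{*}) = 1-(1-\alpha)^n$ by independence of the $X_i$.

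Parts \ref{thm:p_success_limiting_comonotonic} and \ref{thm:p_success_limiting_independent} I would obtain by specialising the integrand in Theorem \ref{thm:p_success_m-dimensional_integral}. When $Z$ and $X$ are comonotonic, $F_X(X)=F_Z(Z)$ pointwise forces $\mathcal{C}_{X|Z}(\alpha|z)=\mathbb{I}_{\{z\le\alpha\}}$; since $z_1\le\dots\le z_m$ on the domain of integration, the bracketed factor in \eqref{eq:p_success_m-dimensional_integral} reduces to $\mathbb{I}_{\{z_1\le\alpha\}}$, and the integral collapses to $\operatorname{Pr}(U_{1:n}\le\alpha)=1-(1-\alpha)^n$, where $U_{1:n}\le\dots\le U_{n:n}$ are the order statistics of $n$ i.i.d.\ $\operatorname{Uniform}(0,1)$ variables. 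Here I use the standard fact that $\frac{n!}{(n-m)!}(1-z_m)^{n-m}$ is exactly the joint density of the $m$ smallest such order statistics, and in particular integrates to $1$ over $0<z_1<\dots<z_m<1$. When $Z$ and $X$ are independent, $\mathcal{C}_{X|Z}(\alpha|z)=\alpha$, so the bracketed factor is the constant $1-(1-\alpha)^m$ and the same normalisation gives $p_{\mathrm{success}}=1-(1-\alpha)^m$; equivalently, the horse-race selection depends only on $Z_1,\dots,Z_n$, hence is independent of $(X_1,\dots,X_n)$, so $X_{\langle 1\rangle},\dots,X_{\langle m\rangle}$ are i.i.d.\ copies of $X$.

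The substance is part \ref{thm:p_success_limiting_n_to_infty}. The plan is to read \eqref{eq:p_success_m-dimensional_integral} as $p_{\mathrm{success}}^{\mathcal{C}_{Z,X}}(n,m,\alpha)=\mathbb{E}\bigl[1-\prod_{j=1}^{m}(1-\mathcal{C}_{X|Z}(\alpha|U_{j:n}))\bigr]$, with $U_{1:n}\le\dots\le U_{m:n}$ the $m$ smallest order statistics of $n$ i.i.d.\ $\operatorname{Uniform}(0,1)$ variables. Holding $m$ fixed and letting $n\to\infty$, $U_{m:n}$ (which is $\operatorname{Beta}(m,n-m+1)$ with mean $m/(n+1)$) converges to $0$, hence so does every $U_{j:n}$; and because $X$ is SIPD in $Z$, the map $z\mapsto\mathcal{C}_{X|Z}(\alpha|z)$ is non-increasing, so its right-hand limit at $0$ exists and is precisely what $\mathcal{C}_{X|Z}(\alpha|0)$ denotes. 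Consequently each factor $\mathcal{C}_{X|Z}(\alpha|U_{j:n})$ converges to $\mathcal{C}_{X|Z}(\alpha|0)$, the integrand converges to $1-(1-\mathcal{C}_{X|Z}(\alpha|0))^m$, and since the integrand lies in $[0,1]$ the bounded convergence theorem lets me pass the limit through the expectation. I expect this last interchange to be the main obstacle: it is exactly the SIPD hypothesis that makes the right-limit $\mathcal{C}_{X|Z}(\alpha|0)$ well defined and makes $z\mapsto\mathcal{C}_{X|Z}(\alpha|z)$ right-continuous at $0$, which is what the continuous-mapping step needs; and it is worth noting that Proposition \ref{thm:monotonicity_OO} (monotonicity in $n$) already guarantees the limit exists, so only its value is genuinely at issue.
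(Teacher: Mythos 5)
Your proposal is correct and follows essentially the same decomposition as the paper's own proof: parts (a)--(c) and (f) via the general bounds of Proposition \ref{thm:general_bounds_OO}, parts (d) and (e) by specialising the conditional copula in the integral of Theorem \ref{thm:p_success_m-dimensional_integral}, and part (g) by concentration of the first $m$ order statistics at zero. The only notable difference is that your argument for (g) --- monotonicity of $z \mapsto \mathcal{C}_{X|Z}\left(\alpha\middle|z\right)$ under SIPD to make the boundary value well defined, convergence of $U_{j:n}$ to $0$, and bounded convergence to pass the limit through the expectation --- is a rigorous rendering of the paper's more heuristic step that the joint density \eqref{eq:thm_psuccess_m-dimensional_integral_order_statistics_pdf} ``converges to the Dirac delta function'' at $z_{m} = 0$.
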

Due to the specification of Problem \ref{prob:inversion}, we are primarily interested in the regime of large $n$ and relatively small fixed $m$. Sadly, the property in Proposition \ref{thm:p_success_limiting}\ref{thm:p_success_limiting_n_to_infty} indicates that in general, for finite $m$ we have $\lim_{n\to\infty}p_{\mathrm{success}}^{\mathcal{C}_{Z,X}}\left(n, m, \alpha\right) < 1$, whenever $\mathcal{C}_{X|Z}\left(\alpha\middle|0\right) < 1$. For example, the bivariate Frank copula \cite[\S 4.5.1]{Joe2014} with parameter $\varpi > 0$ has boundary conditional CDF
\begin{equation}
\mathcal{C}_{X|Z}^{\mathrm{Frank}}\left(\alpha\middle|0;\varpi\right)=\dfrac{1-e^{-\varpi \alpha}}{1-e^{-\varpi}} < 1.
\end{equation}
Thus to address Problem \ref{prob:inversion}\ref{prob:inversion_existence} in general, we can only attain a high success probability with a combination of sufficiently large $n$ and $m$. However, there do exist classes of copulas where $\lim_{n\to\infty}p_{\mathrm{success}}\left(n, m, \alpha\right) = 1$ for all finite $m \geq 1$. In these cases, it is possible to address Problem \ref{prob:inversion}\ref{prob:inversion_n}, i.e. for a prescribed high probability $1 - \delta$ (with any $\delta \in \left(0, 1\right]$ and $m$, $\alpha$ fixed), to find an $n^{*}$ such that $p_{\mathrm{success}}^{\mathcal{C}_{Z,X}}\left(n^{*}, m, \alpha\right) \geq 1 - \delta$. A naive approach would be to increment $n$ and numerically evaluate $p_{\mathrm{success}}^{\mathcal{C}_{Z,X}}$ using the $m$-dimensional integral \eqref{eq:p_success_m-dimensional_integral} until such an $n^{*}$ is found. In the next section, we study one such class of copula which satisfies this property, and explore an alternative approach for finding $n^{*}$ using an analytic lower bound for $p_{\mathrm{success}}$.

\section{Gaussian Copula Success Probability}
\label{sec:gaussian_copula_ordinal_optimisation}

The naive approach to address Problem \ref{prob:inversion}\ref{prob:inversion_n} is by numerically evaluating the $m$-dimensional integral iteratively (discussed at the end of Section \ref{sec:OO_properties}). However, several numerical issues pervade this approach.
\begin{itemize}
\item Firstly, $m$-dimensional integrals will become hard to compute (even numerically) for sufficiently large $m$. 
\item Secondly, even by taking $m = 1$ (which lower bounds the success probability for $m \geq 1$ due to Proposition \ref{thm:monotonicity_OO}\ref{thm:monotonicity_OO_m}), we do not know the vicinity of the order of magnitude for $n$ to begin evaluating $p_{\mathrm{success}}^{\mathcal{C}_{Z,X}}$. For example, we could begin evaluating $p_{\mathrm{success}}^{\mathcal{C}_{Z,X}}$ by incrementing $n$ starting from $n = 100$, but it would actually require $n \approx 10^{9}$ before $p_{\mathrm{success}}^{\mathcal{C}_{Z,X}} \geq 1 - \delta$, resulting in an excessive number of evaluations.
\item Lastly, even if we did know (or can narrow down) the vicinity of the order of magnitude for $n$, it may not be possible to obtain an accurate numerical evaluation of $p_{\mathrm{success}}^{\mathcal{C}_{Z,X}}$ for very large $n$, since the factor $\left(1 - z_{m}\right)^{n - m}n!/\left(n - m\right)!$ related to the probability density in \eqref{eq:p_success_m-dimensional_integral} tends towards a degenerate distribution concentrated over zero as $n \to \infty$, for fixed $m$.
\end{itemize}
In this section, we address the above issues by specialising the success probability to the case when $\left(Z, X\right)$ has a bivariate Gaussian copula. This allows us to develop an analytic lower bound for the success probability, which may be inverted to directly address Problem \ref{prob:inversion}\ref{prob:inversion_n}. The bivariate Gaussian copula is defined as follows.

\begin{definition}[Bivariate Gaussian copula]
Let $\left(Z, X\right)$ be a bivariate standard Gaussian vector with correlation $\rho \in \left[-1, 1\right]$, i.e.
\begin{equation}
\begin{bmatrix}Z\\
X
\end{bmatrix}\sim\mathcal{N}\left(\begin{bmatrix}0\\
0
\end{bmatrix},\begin{bmatrix}1 & \rho\\
\rho & 1
\end{bmatrix}\right).
\label{eq:bivariate-gaussian-copula}
\end{equation}
Then the bivariate Gaussian copula with correlation $\rho$, denoted $\mathcal{NC}\left(\rho\right)$, is the distribution of $\left(\Phi\left(Z\right), \Phi\left(X\right)\right)$.
\label{def:bivariate_gaussian_copula}
\end{definition}
The class of multivariate distributions with Gaussian copulas are referred to as \textit{non-paranormal} distributions \cite{Liu2012a}, and alternatively as \textit{meta-Gaussian} distributions \cite{Storvik2009}. A bivariate Gaussian copula is entirely specified by the correlation parameter $\rho$, which also neatly summarises the dependence within the distribution. Thus, we denote the success probability with a Gaussian copula as $p_{\mathrm{success}}^{\mathcal{NC}\left(\rho\right)}\left(n, m, \alpha\right)$. The bivariate Gaussian copula is known to satisfy the stronger condition of \textit{positive likelihood ratio dependence} for $\rho > 0$ \cite[\S 4.3.1]{Joe2014}, which implies SIPD of $X$ in $Z$ \cite[Theorem 5.2.19]{Nelsen1999}. Thus all the properties in Section \ref{sec:OO_properties} hold for $p_{\mathrm{success}}^{\mathcal{NC}\left(\rho\right)}$. It is also known that the boundary CDF for the bivariate Gausssian is $\mathcal{C}_{X|Z}^{\mathcal{N}}\left(\cdot\middle| 0; \rho\right) = 1$ for all $\rho > 0$ \cite[\S 4.3.1]{Joe2014}, so by Proposition \ref{thm:p_success_limiting}\ref{thm:p_success_limiting_n_to_infty}, we immediately have
\begin{equation}
\lim_{n\to\infty}p_{\mathrm{success}}^{\mathcal{NC}\left(\rho\right)}\left(n, m, \alpha\right) = 1
\label{eq:convergence_n_gaussian_copula}
\end{equation}
when $\rho > 0$, for any $m \geq 1$. The Gaussian copula model for $\left(Z, X\right)$ also admits an additive independent Gaussian noise representation (see Appendix \ref{sec:additive_noise}).    

\subsection{Analytic Lower Bound for Success Probability}
\label{sec:analytic_lower_bound}

Under a Gaussian copula model for $\left(Z, X\right)$ with positive correlation $\rho > 0$, we can derive the following analytic lower bound on the success probability.
\begin{theorem}[Success probability analytic lower bound]
Given some $\omega \in \left(0, \frac{\pi}{2}\right)$, let
\begin{equation}
\mathfrak{c}_{1} = \dfrac{1}{2} - \dfrac{\omega}{\pi}, \quad \mathfrak{c}_{2} = \dfrac{\cot{\omega}}{\pi - 2\omega}
\end{equation}
and
\begin{gather}
\mu_{n} = -\sqrt{\dfrac{\log\left(n\mathfrak{c}_{1}\right)}{\mathfrak{c}_{2}}} \label{eq:mu-constructed} \\
\sigma_{n}^{2} = \dfrac{-\log\log 2}{2\mathfrak{c}_{2}\left(\log\left(n\mathfrak{c}_{1}\right) - \log\log 2\right)}. \label{eq:sigma-constructed}
\end{gather}
Then for any $\omega \in \left(0, \frac{\pi}{2}\right)$, there exists an $\widetilde{n}\left(\omega\right) \in \mathbb{N}$ such that for all $n \geq \widetilde{n}\left(\omega\right)$, $m \in \left\{1, \dots, n\right\}$, $\rho \in \left(0, 1\right]$, and $\alpha \in \left(0, 1\right]$, we have
\begin{equation}
p_{\mathrm{success}}^{\mathcal{NC}\left(\rho\right)}\left(n, m, \alpha\right) \geq p_{\mathrm{success}}^{\mathcal{NC}\left(\rho\right)}\left(n, 1, \alpha\right) \geq \Phi\left(\dfrac{\Phi^{-1}\left(\alpha\right) - \rho\mu_{n}}{\sqrt{1 - \rho^{2} + \rho^{2}\sigma_{n}^{2}}}\right).
\label{eq:success-prob-copula-lb}
\end{equation}
\label{thm:constructed-lower-bound}
\end{theorem}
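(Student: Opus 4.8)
The plan is to start from the limiting-form expression in Proposition~\ref{thm:p_success_limiting}\ref{thm:p_success_limiting_n_to_infty} — or rather from the exact integral in Theorem~\ref{thm:p_success_m-dimensional_integral} specialised to $m = 1$ — and to replace the hard-to-control factors by tractable surrogates. The first inequality in \eqref{eq:success-prob-copula-lb} is immediate from Proposition~\ref{thm:monotonicity_OO}\ref{thm:monotonicity_OO_m}, so the work is entirely in the second inequality. For $m = 1$, Theorem~\ref{thm:p_success_m-dimensional_integral} gives $p_{\mathrm{success}}^{\mathcal{NC}(\rho)}(n,1,\alpha) = n\int_{0}^{1}\mathcal{C}_{X|Z}(\alpha\mid z)(1-z)^{n-1}\,\mathrm{d}z$. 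The key observation is that $n(1-z)^{n-1}$ is the density of the minimum of $n$ i.i.d.\ uniforms, i.e.\ the density of $U_{1:n}$, and that pushing this through the (monotone) probability integral transform $z \mapsto \Phi^{-1}(z)$ on the $Z$-side lets us write the integrand in terms of the first order statistic of $n$ i.i.d.\ standard Gaussians. Concretely, $\mathcal{C}_{X|Z}^{\mathcal{N}}(\alpha\mid \Phi(w);\rho) = \Phi\!\bigl((\Phi^{-1}(\alpha)-\rho w)/\sqrt{1-\rho^2}\bigr)$, so that $p_{\mathrm{success}}^{\mathcal{NC}(\rho)}(n,1,\alpha) = \mathbb{E}\bigl[\Phi\bigl((\Phi^{-1}(\alpha)-\rho W_{1:n})/\sqrt{1-\rho^2}\bigr)\bigr]$ where $W_{1:n}$ is the Gaussian minimum.

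Next I would bound the Gaussian minimum stochastically. Since $\Phi^{-1}(\alpha) - \rho w$ is decreasing in $w$ and $\Phi$ is increasing, the map $w \mapsto \Phi\bigl((\Phi^{-1}(\alpha)-\rho w)/\sqrt{1-\rho^2}\bigr)$ is non-increasing; hence if $W_{1:n} \underset{\mathrm{st}}{\succeq} Y_n$ for some auxiliary random variable $Y_n$, then $p_{\mathrm{success}}^{\mathcal{NC}(\rho)}(n,1,\alpha) \geq \mathbb{E}\bigl[\Phi\bigl((\Phi^{-1}(\alpha)-\rho Y_n)/\sqrt{1-\rho^2}\bigr)\bigr]$. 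The natural choice is to take $Y_n \sim \mathcal{N}(\mu_n,\sigma_n^2)$, because a Gaussian integrand against a Gaussian measure integrates in closed form: $\mathbb{E}\bigl[\Phi(a - bY_n)\bigr] = \Phi\bigl((a-b\mu_n)/\sqrt{1+b^2\sigma_n^2}\bigr)$, which with $a = \Phi^{-1}(\alpha)/\sqrt{1-\rho^2}$ and $b = \rho/\sqrt{1-\rho^2}$ collapses exactly to the right-hand side of \eqref{eq:success-prob-copula-lb}. So the entire theorem reduces to verifying that, for all $n$ beyond some threshold $\widetilde{n}(\omega)$, the stochastic-dominance relation $W_{1:n} \underset{\mathrm{st}}{\succeq} \mathcal{N}(\mu_n,\sigma_n^2)$ holds, i.e.\ that $\operatorname{Pr}(W_{1:n} \leq t) \leq \Phi\bigl((t-\mu_n)/\sigma_n\bigr)$ for every $t \in \mathbb{R}$.

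The distribution function of $W_{1:n}$ is $\operatorname{Pr}(W_{1:n}\leq t) = 1 - (1 - \Phi(t))^n = 1 - Q(t)^n$, so the required inequality is $1 - (1-\Phi(t))^n \leq \Phi\bigl((t-\mu_n)/\sigma_n\bigr)$ for all $t$. This is where the constants $\mathfrak{c}_1,\mathfrak{c}_2$ and the peculiar forms of $\mu_n,\sigma_n$ enter: one needs a two-sided polynomial/Gaussian sandwich of the $Q$-function. The idea is to use an elementary lower bound of the shape $Q(t) \geq \mathfrak{c}_1 e^{-\mathfrak{c}_2 t^2}$ valid on the relevant range of $t$ — this is exactly the kind of bound that the angle parameter $\omega \in (0,\pi/2)$ produces, via a bound on $Q(t)$ of Craig-type (the $\cot\omega/(\pi - 2\omega)$ factor is the signature of integrating $\exp(-t^2/(2\sin^2\theta))$ over $\theta \in (\omega, \pi/2)$, cf.\ the standard $Q(t) = \frac{1}{\pi}\int_0^{\pi/2}\exp(-t^2/(2\sin^2\theta))\,\mathrm{d}\theta$ identity). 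Feeding $Q(t) \geq \mathfrak{c}_1 e^{-\mathfrak{c}_2 t^2}$ into $1 - Q(t)^n$ and comparing with the Gaussian CDF, one checks that $\mu_n$ as in \eqref{eq:mu-constructed} is (asymptotically) the point where $Q(t)^n = 1/2$, i.e.\ the median of $W_{1:n}$, and that $\sigma_n^2$ in \eqref{eq:sigma-constructed} is calibrated so that the Gaussian CDF with these parameters stays below $1 - Q(t)^n$ near that median — the $-\log\log 2$ appears precisely because $(1/2)^{?} $ matching forces a $\log\log 2$ term. I expect the main obstacle to be exactly this last step: establishing the pointwise inequality $1 - Q(t)^n \leq \Phi\bigl((t-\mu_n)/\sigma_n\bigr)$ uniformly in $t$ for $n$ large, which will require splitting into regions ($t$ well below $\mu_n$, near $\mu_n$, and well above $\mu_n$), using the $Q$-function sandwich in each region, and carefully tracking how large $n$ must be so that lower-order terms are dominated — this is what forces the non-explicit threshold $\widetilde{n}(\omega)$ rather than a clean closed form, and choosing $\omega$ is the price paid for getting an explicit $\mathfrak{c}_1,\mathfrak{c}_2$ at all.
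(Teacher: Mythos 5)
Your overall architecture is exactly the paper's: reduce to $m=1$ via monotonicity in $m$ (Proposition \ref{thm:monotonicity_OO}\ref{thm:monotonicity_OO_m}), rewrite $p_{\mathrm{success}}^{\mathcal{NC}\left(\rho\right)}\left(n,1,\alpha\right)$ as $\mathbb{E}\left[g\left(W_{1:n}\right)\right]$ with $g\left(w\right) = \Phi\bigl(\left(\Phi^{-1}\left(\alpha\right)-\rho w\right)/\sqrt{1-\rho^{2}}\bigr)$ and $W_{1:n}$ the first order statistic of $n$ standard Gaussians, compare $W_{1:n}$ stochastically to a surrogate $\mathcal{N}\left(\mu_{n},\sigma_{n}^{2}\right)$, and collapse the resulting Gaussian--Gaussian integral in closed form; the $Q$-function sandwich with the $\omega$-parametrised constants and the region-by-region verification are likewise exactly what the paper does (Lemmas \ref{lem:log-concave-bounds}--\ref{lem:stoch-dom-constructed}). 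However, you have the direction of the stochastic comparison reversed, and this is not a cosmetic slip: both the implication you invoke and the pointwise inequality you set out to verify are wrong as stated.

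Concretely: since $g$ is non-increasing, your hypothesis $W_{1:n} \underset{\mathrm{st}}{\succeq} Y_{n}$ yields $\mathbb{E}\left[g\left(W_{1:n}\right)\right] \leq \mathbb{E}\left[g\left(Y_{n}\right)\right]$ --- an \emph{upper} bound on the success probability, not the lower bound you claim. What is actually needed (and what Lemma \ref{lem:stoch-dom-constructed} establishes) is the opposite relation $W_{1:n} \underset{\mathrm{st}}{\preceq} \mathcal{N}\left(\mu_{n},\sigma_{n}^{2}\right)$: the surrogate must stochastically dominate the minimum, so that feeding a stochastically larger input into the decreasing map $g$ can only shrink the expectation. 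Correspondingly, the pointwise inequality you propose to verify, $1 - Q\left(t\right)^{n} \leq \Phi\left(\left(t-\mu_{n}\right)/\sigma_{n}\right)$ for all $t$, is false for the given $\mu_{n},\sigma_{n}$ and large $n$; the constants are engineered so that the \emph{reverse} inequality $Q\left(t\right)^{n} \leq Q\left(\left(t-\mu_{n}\right)/\sigma_{n}\right)$ holds everywhere. (Intuitively, $\mu_{n} = -\sqrt{\log\left(n\mathfrak{c}_{1}\right)/\mathfrak{c}_{2}}$ sits to the \emph{right} of the typical value $\approx -\sqrt{2\log n}$ of the Gaussian minimum, because $\mathfrak{c}_{1} < \tfrac{1}{2}$ and $\mathfrak{c}_{2} > \tfrac{1}{2}$, so the surrogate is the stochastically larger of the two.) Had you attempted the three-region verification you sketch, you would have found your target inequality failing and been forced to backtrack. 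Flipping both the dominance relation and the resulting expectation inequality recovers the paper's proof essentially verbatim, so the gap is localised --- but as written the argument does not establish the theorem.
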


The proof of this result is found in Appendix \ref{sec:proofs}. Furthermore, given any $n \in \mathbb{N}$, $m \in \left\{1, \dots, n\right\}$, $\rho \in \left(0, 1\right]$, and $\alpha \in \left(0, 1\right]$, we can optimise the bound with respect to $\omega$ by
\begin{equation}
p_{\mathrm{success}}^{\mathcal{NC}\left(\rho\right)}\left(n, m, \alpha\right) \geq p_{\mathrm{success}}^{\mathcal{NC}\left(\rho\right)}\left(n, 1, \alpha\right) \geq \sup_{\omega \in \Omega_{n}}\Phi\left(\dfrac{\Phi^{-1}\left(\alpha\right) - \rho\mu_{n}\left(\omega\right)}{\sqrt{1 - \rho^{2} + \rho^{2}\left[\sigma_{n}\left(\omega\right)\right]^{2}}}\right),
\label{eq:success-prob-copula-lb-optimised} 
\end{equation}
where $\Omega_{n} \subset \left(0, \frac{\pi}{2}\right)$ is the set of all $\omega$ such that $n \geq \widetilde{n}\left(\omega\right)$, while $\mu_{n}\left(\omega\right)$, $\left[\sigma_{n}\left(\omega\right)\right]^{2}$ are \eqref{eq:mu-constructed}, \eqref{eq:sigma-constructed} respectively but with dependence on $\omega$ explicitly denoted. For a given $\omega$, it is also worthwhile to consider the smallest integer $\widetilde{n}\left(\omega\right)$ such that \eqref{eq:success-prob-copula-lb} is valid. It is clear that we must have $n^{*}\left(\omega\right) > 1/\mathfrak{c}_{1}$, otherwise it possibly allows for $\log\left(n\mathfrak{c}_{1}\right) < 0$ in \eqref{eq:mu-constructed} and \eqref{eq:sigma-constructed}. Given $n$ and $\omega$, one can numerically certify whether $n \geq \widetilde{n}\left(\omega\right)$, using sufficient conditions from the proof of Theorem \ref{thm:constructed-lower-bound}. We have empirically observed that $\widetilde{n}\left(\omega\right)$ can be quite small; we are usually able to accept $\widetilde{n}\left(\omega\right) = \left\lceil 1/\mathfrak{c}_{1}\right\rceil$. Using this certification, the optimised lower bound \eqref{eq:success-prob-copula-lb-optimised} can also be implemented via a numerical procedure, noting that we need only conduct search over a univariate bounded interval. Further discussion and pseudocode for these implementations can be found in Appendix \ref{sec:algorithms}. \\

Throughout Figures \ref{fig:approx_n2_v3}-\ref{fig:approx_alpha_v3}, we plot the optimised lower bound \eqref{eq:success-prob-copula-lb-optimised} from Theorem \ref{thm:constructed-lower-bound}, with baseline values $m = 1$, $\alpha = 0.05$, $\rho = 0.4$ and $n = 100$, while varying s single quantity. In Figure \ref{fig:approx_n2_v3}, this is compared against a numerical evaluation of the success probability using \eqref{eq:p_success_m-dimensional_integral}. However, since the density in the integrand concentrates over zero as $n$ increases, numerical integration becomes inaccurate for large $n$ (as discussed at the beginning of this section). In Figure \ref{fig:approx_n_v3}, we instead plot the lower bound over a semi-log horizontal axis scale for large $n$, to illustrate the convergence of the success probability to one. These plots demonstrate that the behaviour of the lower bound is reasonably close to the actual probability. As the lower bound has been derived with $m = 1$ while the bound itself does not change with $m$, this means the bound is least conservative for $m = 1$, and will generally become more conservative as $m$ grows. For instance with $\alpha = 0.05$ and $m = 32$, the lower bound from Proposition \ref{thm:general_bounds_OO} yields $p_{\mathrm{success}}^{\mathcal{N}} \geq 0.806$ for any $n \geq 32$, already surpassing the lower bound with $n = 10^{9}$ from Figure \ref{fig:approx_n2_v3}. However, the bound \eqref{eq:success-prob-copula-lb} is still useful in the regime of small $m$ and large $n$, e.g. for addressing Problem \ref{prob:inversion}\ref{prob:inversion_n}.
    
    \begin{figure}[htb!]
        \centering
        \vskip\baselineskip
        \begin{subfigure}[t]{0.49\textwidth}
            \centering
            \includegraphics[width=\textwidth]{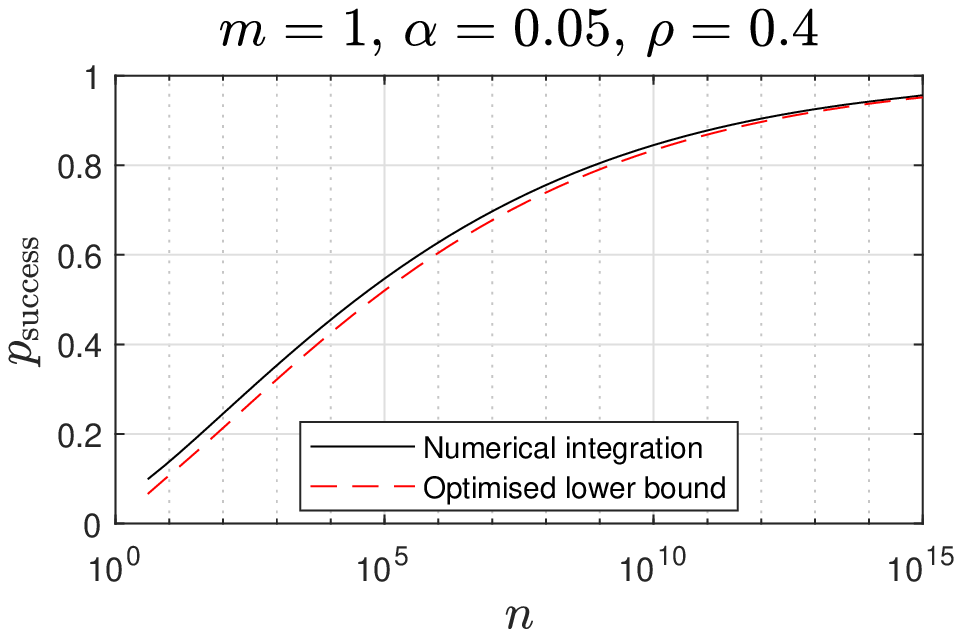}
            \caption{Comparison of the optimised lower bound in \eqref{eq:success-prob-copula-lb-optimised} to numerical integration via Theorem \ref{thm:p_success_m-dimensional_integral}, as $n$ is varied. In addition, this figure demonstrates monotonicity in $n$ from Proposition \ref{thm:monotonicity_OO}\ref{thm:monotonicity_OO_n}.}
            \label{fig:approx_n2_v3}
        \end{subfigure}
        \hfill
        \begin{subfigure}[t]{0.49\textwidth}  
            \centering 
            \includegraphics[width=\textwidth]{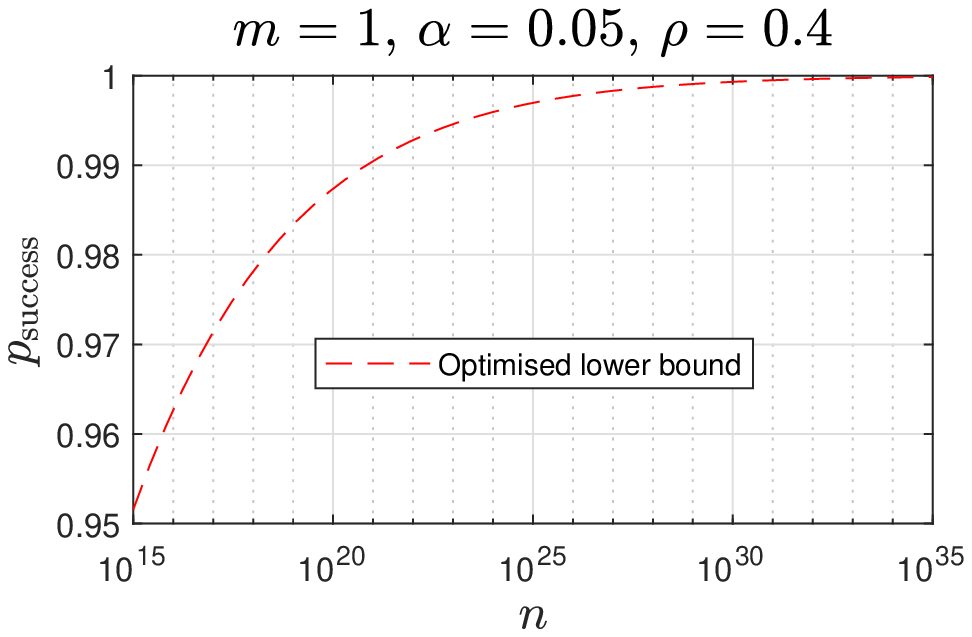}
            \caption{Comparison of the lower bound \eqref{eq:success-prob-copula-lb-optimised} when $n$ is varied over a semi-log horizontal axis scale. In addition, this figure demonstrates convergence to one in $n$ from \eqref{eq:convergence_n_gaussian_copula}.}

            \label{fig:approx_n_v3}
        \end{subfigure}
        \vskip\baselineskip
        \begin{subfigure}[t]{0.49\textwidth}   
            \centering 
            \includegraphics[width=\textwidth]{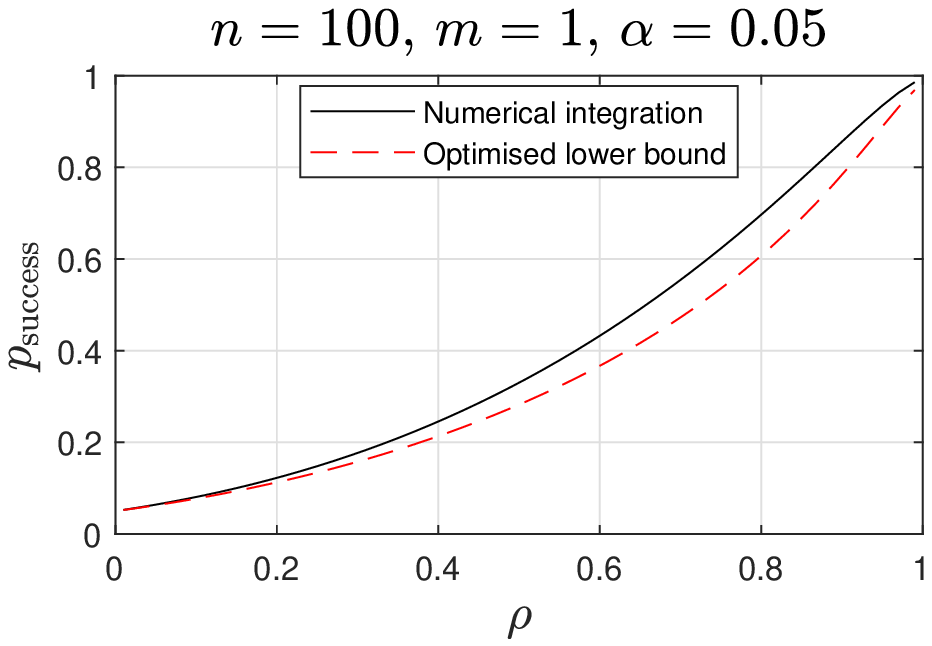}
            \caption{Comparison of the optimised lower bound in \eqref{eq:success-prob-copula-lb-optimised} to numerical integration via Theorem \ref{thm:p_success_m-dimensional_integral}, as $\rho$ is varied.}
            \label{fig:approx_rho_v3}
        \end{subfigure}
        \hfill
        \begin{subfigure}[t]{0.49\textwidth}   
            \centering 
            \includegraphics[width=\textwidth]{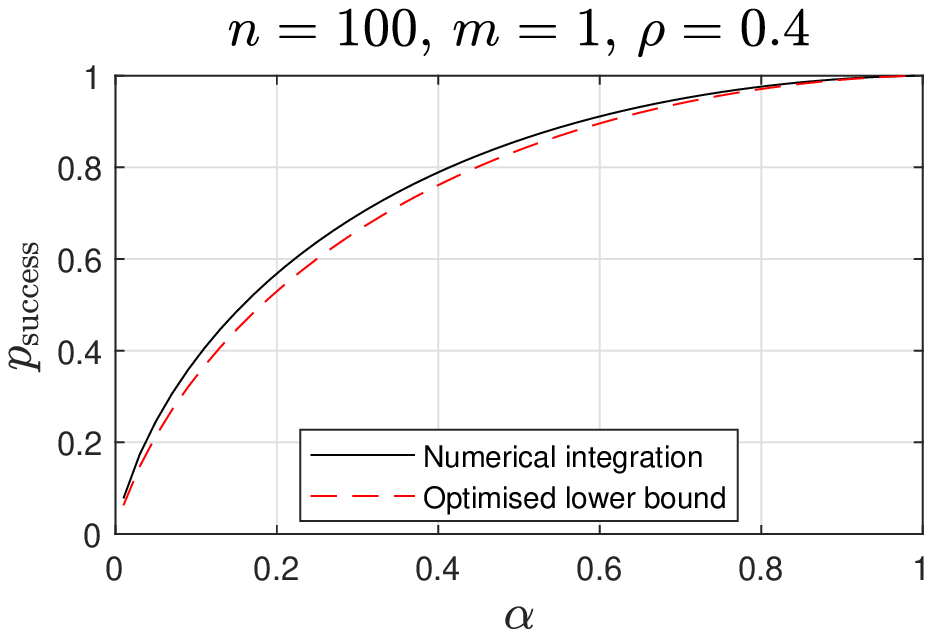}
            \caption{Comparison of the optimised lower bound in \eqref{eq:success-prob-copula-lb-optimised} to numerical integration via Theorem \ref{thm:p_success_m-dimensional_integral}, as $\alpha$ is varied. In addition, this figure demonstrates monotonicity in $\alpha$ from Proposition \ref{thm:monotonicity_OO}\ref{thm:monotonicity_OO_alpha}.}
            \label{fig:approx_alpha_v3}
        \end{subfigure}
        
               \caption{\small Numerical results for the lower bound.} 
        \label{fig:numerical2}
    \end{figure}

\subsection{Inversion of Analytic Lower Bound}
\label{sec:invert_lower_bound}

We now use the analytic lower bound in \eqref{eq:success-prob-copula-lb} of Theorem \ref{thm:constructed-lower-bound} to address Problem \ref{prob:inversion}\ref{prob:inversion_n}, in the case that $\left(Z, X\right)$ has a Gaussian copula. Putting this into the contexts discussed earlier, we may address questions of the nature "how many job candidates should the hiring manager interview", or "how many simulations should be conducted offline" in order to guarantee a prescribed high success probability. Observe that the analytic lower bound in \eqref{eq:success-prob-copula-lb} tends to one, as $n \to \infty$. So to address Problem \ref{prob:inversion}\ref{prob:inversion_n} using the lower bound (given $\alpha$, $\rho$ and $\delta$), we aim to invert for $n$ in terms of $\omega$ with the expression
\begin{equation}
\Phi\left(\dfrac{\Phi^{-1}\left(\alpha\right) - \rho\mu_{n}}{\sqrt{1 - \rho^{2} + \rho^{2}\sigma_{n}^{2}}}\right) = 1- \delta.
\end{equation}
Putting the definitions of $\mu_{n}$ and $\sigma_{n}$ from \eqref{eq:mu-constructed} and \eqref{eq:sigma-constructed} respectively, this equation can be rearranged into a quartic equation in $\mathsf{x} = \sqrt{\log\left(n\mathfrak{c}_{1}\right)}$, of the form
\begin{equation}
\mathfrak{a}_{4}\mathsf{x}^{4} + \mathfrak{a}_{3}\mathsf{x}^{3} + \mathfrak{a}_{2}\mathsf{x}^{2} + \mathfrak{a}_{1}\mathsf{x} + \mathfrak{a}_{0} = 0,
\label{eq:quartic}
\end{equation}
where
\begin{align}
\mathfrak{a}_{4} &= -\dfrac{2\rho^{2}}{\log\log 2} \\
\mathfrak{a}_{3} &= -\dfrac{4\Phi^{-1}\left(\alpha\right)\rho\sqrt{\mathfrak{c}_{2}}}{\log\log2} \\
\mathfrak{a}_{2} &= 2\rho^{2}-\dfrac{2\mathfrak{c}_{2}\left(\left[\Phi^{-1}\left(\alpha\right)\right]^{2}-\left[\Phi^{-1}\left(1-\delta\right)\right]^{2}+\rho^{2}\left[\Phi^{-1}\left(1-\delta\right)\right]^{2}\right)}{\log\log2} \\
\mathfrak{a}_{1} &= 4\sqrt{\mathfrak{c}_{2}}\Phi^{-1}\left(\alpha\right)\rho \\
\mathfrak{a}_{0} &= 2\mathfrak{c}_{2}\left(\left[\Phi^{-1}\left(\alpha\right)\right]^{2}-\left[\Phi^{-1}\left(1-\delta\right)\right]^{2}+\rho^{2}\left[\Phi^{-1}\left(1-\delta\right)\right]^{2}\right)-\rho^{2}\left[\Phi^{-1}\left(1-\delta\right)\right]^{2}.
\end{align}
Therefore we take the solution for $n$ corresponding to $\mathsf{x} = \sqrt{\log\left(n\mathfrak{c}_{1}\right)}$ as the greatest real root of the quartic equation. Let this solution for $n$ in terms of $\omega$ be denoted $n^{*}\left(\omega\right)$. According to the monotonicity and convergence properties from Proposition \ref{thm:monotonicity_OO}\ref{thm:monotonicity_OO_n} and \eqref{eq:convergence_n_gaussian_copula} respectively, then provided $n^{*}\left(\omega\right) \geq \widetilde{n}\left(\omega\right)$, we guarantee
\begin{equation}
p_{\mathrm{success}}^{\mathcal{NC}\left(\rho\right)}\left(n^{*}\left(\omega\right), m, \alpha\right) \geq 1 - \delta,
\end{equation}
since $n^{*}\left(\omega\right)$ upper bounds the smallest $n$ needed such that $p_{\mathrm{success}} \geq 1 - \delta$. Moreover, in a similar way to \eqref{eq:success-prob-copula-lb-optimised}, one can numerically optimise with respect to $\omega$ to find the smallest $n^{*}\left(\omega\right)$ that guarantees a high probability of success. Pseudocode implementing this (which also takes into account the requirement $n^{*}\left(\omega\right) \geq \widetilde{n}\left(\omega\right)$) can be found in Algorithm \ref{alg:optimimised_n_high_prob} of Appendix \ref{sec:algorithms}.

\begin{table}[hbt!]
\centering
\caption{Computed values of $n$ which guarantees $p_{\mathrm{success}}^{\mathcal{NC}\left(\rho\right)}\left(n^{*}\left(\omega\right), m, \alpha\right) \geq 1 - \delta$, with fixed $\alpha = 0.01$ and valid for any $m \geq 1$.}
\label{tab:high-prob}
\begin{tabular}{|l|ccc|}
\hline
   & $\delta = 0 .01$        & $\delta = 0.05$         &  $\delta = 0.1$ \\ \hline
$\rho = 0.01$ & $8.144\times 10^{47007}$       & $5.427\times 10^{34246}$ & $8.943 \times 10^{28267}$ \\ 
$\rho = 0.3$ & $3.289\times 10^{51}$       & $1.619\times 10^{38}$ & $8.775 \times 10^{31}$ \\ 
$\rho = 0.6$  & $8.703 \times 10^{11}$ & $1.988 \times 10^{9}$       & $1.078 \times 10^{8}$ \\ 
$\rho = 0.9$  & $16744$     & $4338$     & $2188$   \\ 
$\rho = 0.99$  & $893$     & $505$     & $372$   \\ \hline
\end{tabular}
\end{table}

Table \ref{tab:high-prob} lists computed values of $n^{*}$ numerically optimised with respect to $\omega$, using the aforementioned approach. The table is valid for all $m \geq 1$ (however are values are least conservative when $m = 1$), for fixed $\alpha = 0.01$ and a variety of values for $\rho$ and $\delta$. The values for $n$ trend downwards as $\rho$ increases, which is intuitive (as fewer samples might be required if noisy observations are strongly correlated with the actual values). Of particular note, the case with extremely small correlation $\rho = 0.01$ requires $n$ to be at an impractical order of magnitude, namely $10^{47007}$ when $\delta = 0.01$. This highlights the utility of the analytic lower bound in Theorem \ref{thm:constructed-lower-bound}, which allows for a computationally cheap procedure to find a sufficiently high $n$. If Problem \ref{prob:inversion}\ref{prob:inversion_n} were attempted to be solved by evaluating expression \eqref{eq:p_success_m-dimensional_integral}, then large $n$ such as in the order of $10^{47007}$ would have rendered the evaluation of such probabilities to be intractable. \\

Also, Table \ref{tab:high-prob} illustrates the value of having a strong positive dependence in $\rho$, since it reduces the sample size required to reach a prescribed high probability of success. For instance with $\delta = 0.1$, increasing from $\rho = 0.01$ to $\rho = 0.6$ reduces the order of magnitude required for $n^{*}$ from $10^{28267}$ to a more practical $10^{8}$. As one may have reasonably guessed, increasing the strength of correlation between $Z$ and $X$ (which is in effect, decreasing the amount of noise in the interview/simulation of candidates) has a favourable effect on the success probability.

\section{Conclusion}
\label{sec:conclusion}

Motivated by methods in simulation-based optimisation, we studied the success probability for an offline multiple noisy secretary problem. As a consequence of applying an ordinal optimisation selection rule with a notion of goal softening, the success probability was found to depend only on the underlying copula. One key condition for the copula was that of stochastically increasing positive dependence, which is sufficient to ensure that $p_{\mathrm{success}}$ is non-decreasing in $n$. An analytic lower bound for $p_{\mathrm{success}}$ was developed in the case of the Gaussian copula model, and Figures \ref{fig:approx_n2_v3}-\ref{fig:approx_alpha_v3} illustrate that this lower bound is close to the true success probability. The lower bound may also be inverted to compute sufficiently large values of $n$ which guarantees success probabilities arbitrarily close to one. This is demonstrated in numerical examples by successfully finding such values of $n$, even in regimes which require extremely high orders of magnitude for $n$, whereby numerical integration would not be successful. \\

The following directions are proposed for future work. In order to address Problem \ref{prob:inversion}\ref{prob:inversion_n} for a non-Gaussian copula, invertible analytic lower bounds for other classes of copulae could be investigated. It may also be interesting to study how strength of dependence affects $p_{\mathrm{success}}$ for wider classes of copulas (analogous to the role of $\rho$ in the Gaussian copula), and how the success probability could be estimated, whenever the copula is not specified exactly in practice. Another direction would be to consider a relaxed problem where $\left(Z, X\right)$ is allowed to have a discrete distribution. The present approach may then need to be modified, since the underlying copula for $\left(Z, X\right)$ would no longer be unique.

\bibliographystyle{ieeetr}
\bibliography{oo_secretary_preprint}  






\begin{appendices}

\section{Additive Noise Representations}
\label{sec:additive_noise}

Consider the following causal mechanism (depicted in Figure \ref{fig:causal_X}) for generating $\left(Z, X\right)$: by first generating $Z$, and then generating $X$ given $Z = z$ though
\begin{equation}
X = z + Y,
\label{eq:additive_noise_X}
\end{equation}
where $Y$ is independent of $Z$. An alternative additive causal representation is the reverse (depicted in Figure \ref{fig:causal_Z}): first $X$ is generated, and then $Z$ is generated given $X = x$ through
\begin{equation}
Z = x + Y,
\label{eq:additive_noise_Z}
\end{equation}
where $Y$ is independent of $X$.

 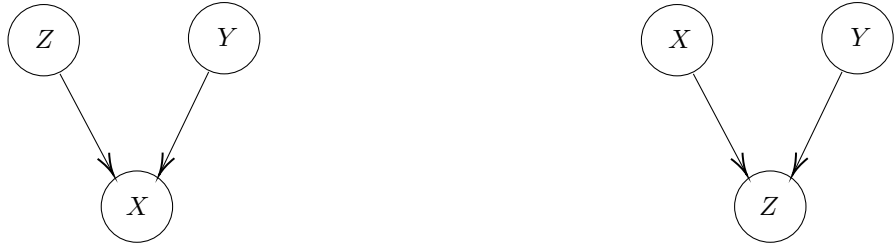
\begin{figure}[hbt!]
       \centering
       \vskip\baselineskip
        \begin{subfigure}[t]{0.49\textwidth}
        \vskip 0pt
        \begin{tikzpicture}[x=0.75pt,y=0.75pt,yscale=-1,xscale=1]

\draw   (314.8,148.2) .. controls (314.8,138.25) and (322.86,130.2) .. (332.8,130.2) .. controls (342.75,130.2) and (350.8,138.25) .. (350.8,148.2) .. controls (350.8,158.14) and (342.75,166.2) .. (332.8,166.2) .. controls (322.86,166.2) and (314.8,158.14) .. (314.8,148.2) -- cycle ;
\draw   (267.8,64.2) .. controls (267.8,54.25) and (275.86,46.2) .. (285.8,46.2) .. controls (295.75,46.2) and (303.8,54.25) .. (303.8,64.2) .. controls (303.8,74.14) and (295.75,82.2) .. (285.8,82.2) .. controls (275.86,82.2) and (267.8,74.14) .. (267.8,64.2) -- cycle ;
\draw   (358.8,63.2) .. controls (358.8,53.25) and (366.86,45.2) .. (376.8,45.2) .. controls (386.75,45.2) and (394.8,53.25) .. (394.8,63.2) .. controls (394.8,73.14) and (386.75,81.2) .. (376.8,81.2) .. controls (366.86,81.2) and (358.8,73.14) .. (358.8,63.2) -- cycle ;
\draw    (294,81.2) -- (320.06,130.43) ;
\draw [shift={(321,132.2)}, rotate = 242.1] [color={rgb, 255:red, 0; green, 0; blue, 0 }  ][line width=0.75]    (10.93,-3.29) .. controls (6.95,-1.4) and (3.31,-0.3) .. (0,0) .. controls (3.31,0.3) and (6.95,1.4) .. (10.93,3.29)   ;
\draw    (369,80.2) -- (344.87,130.39) ;
\draw [shift={(344,132.2)}, rotate = 295.68] [color={rgb, 255:red, 0; green, 0; blue, 0 }  ][line width=0.75]    (10.93,-3.29) .. controls (6.95,-1.4) and (3.31,-0.3) .. (0,0) .. controls (3.31,0.3) and (6.95,1.4) .. (10.93,3.29)   ;

\draw (326,142) node [anchor=north west][inner sep=0.75pt]    {$X$};
\draw (280,58) node [anchor=north west][inner sep=0.75pt]    {$Z$};
\draw (372,57) node [anchor=north west][inner sep=0.75pt]    {$Y$};

\end{tikzpicture}
\centering
\caption{Causal additive noise representation for $X$.}
\label{fig:causal_X}
        \end{subfigure}
        \hfill
        \begin{subfigure}[t]{0.49\textwidth}
        \vskip 0pt
		\begin{tikzpicture}[x=0.75pt,y=0.75pt,yscale=-1,xscale=1]

\draw   (314.8,148.2) .. controls (314.8,138.25) and (322.86,130.2) .. (332.8,130.2) .. controls (342.75,130.2) and (350.8,138.25) .. (350.8,148.2) .. controls (350.8,158.14) and (342.75,166.2) .. (332.8,166.2) .. controls (322.86,166.2) and (314.8,158.14) .. (314.8,148.2) -- cycle ;
\draw   (267.8,64.2) .. controls (267.8,54.25) and (275.86,46.2) .. (285.8,46.2) .. controls (295.75,46.2) and (303.8,54.25) .. (303.8,64.2) .. controls (303.8,74.14) and (295.75,82.2) .. (285.8,82.2) .. controls (275.86,82.2) and (267.8,74.14) .. (267.8,64.2) -- cycle ;
\draw   (358.8,63.2) .. controls (358.8,53.25) and (366.86,45.2) .. (376.8,45.2) .. controls (386.75,45.2) and (394.8,53.25) .. (394.8,63.2) .. controls (394.8,73.14) and (386.75,81.2) .. (376.8,81.2) .. controls (366.86,81.2) and (358.8,73.14) .. (358.8,63.2) -- cycle ;
\draw    (294,81.2) -- (320.06,130.43) ;
\draw [shift={(321,132.2)}, rotate = 242.1] [color={rgb, 255:red, 0; green, 0; blue, 0 }  ][line width=0.75]    (10.93,-3.29) .. controls (6.95,-1.4) and (3.31,-0.3) .. (0,0) .. controls (3.31,0.3) and (6.95,1.4) .. (10.93,3.29)   ;
\draw    (369,80.2) -- (344.87,130.39) ;
\draw [shift={(344,132.2)}, rotate = 295.68] [color={rgb, 255:red, 0; green, 0; blue, 0 }  ][line width=0.75]    (10.93,-3.29) .. controls (6.95,-1.4) and (3.31,-0.3) .. (0,0) .. controls (3.31,0.3) and (6.95,1.4) .. (10.93,3.29)   ;

\draw (326,142) node [anchor=north west][inner sep=0.75pt]    {$Z$};
\draw (280,58) node [anchor=north west][inner sep=0.75pt]    {$X$};
\draw (372,57) node [anchor=north west][inner sep=0.75pt]    {$Y$};

\end{tikzpicture}
		\centering
\caption{Causal additive noise representation for $Z$.}
\label{fig:causal_Z}
        \end{subfigure}
       \caption{\small \textit{Causal graphs} \cite[\S 3.2.2]{Morgan2015} for additive noise representations.} 
        \label{fig:causal_diagrams}
    \end{figure}
\begin{proposition}[Additive noise implies SIPD]
If either of the following conditions hold:
\begin{itemize}
\item $\left(Z, X\right)$ is generated via the causal representation \eqref{eq:additive_noise_X}, or
\item $\left(Z, X\right)$ is generated via the causal representation \eqref{eq:additive_noise_Z}, and moreover the copula of $\left(Z, X\right)$ is exchangeable (i.e. its distribution function $\mathcal{C}_{Z, X}\left(z, x\right)$ is permutation symmetric),
\end{itemize}
then $X$ is SIPD in $Z$.
\label{prop:additive_noise_implies_SIPD}
\end{proposition}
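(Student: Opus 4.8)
The plan is to handle the two bullet points separately: the first is immediate from the causal construction, while the second needs one extra step in which exchangeability of the copula is used to flip the conditioning direction.

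\emph{First bullet.} Under the representation \eqref{eq:additive_noise_X}, by construction the conditional law of $X$ given $Z = z$ satisfies $\left[X \mid Z = z\right] \underset{\mathrm{st}}{=} z + Y$, where $Y$ does not depend on $z$. Hence for $z \le z'$ we have $z + Y \underset{\mathrm{st}}{\preceq} z' + Y$, i.e. $\operatorname{Pr}\left(X > x \mid Z = z\right) = \operatorname{Pr}\left(Y > x - z\right) \le \operatorname{Pr}\left(Y > x - z'\right) = \operatorname{Pr}\left(X > x \mid Z = z'\right)$, which is exactly \eqref{eq:sipd}. So $X$ is SIPD in $Z$.

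\emph{Second bullet.} Applying the first-bullet argument to \eqref{eq:additive_noise_Z} with the roles of $Z$ and $X$ interchanged shows that $\left[Z \mid X = x\right]$ is stochastically increasing in $x$, i.e. $Z$ is SIPD in $X$. It remains to deduce that $X$ is SIPD in $Z$, and this is where exchangeability of the copula enters. First I would record that SIPD is a property of the copula alone: with $U := F_X\left(X\right)$ and $V := F_Z\left(Z\right)$, continuity of $\left(Z, X\right)$ gives $\operatorname{Pr}\left(X > x \mid Z = z\right) = 1 - \mathcal{C}_{X|Z}\left(F_X\left(x\right) \middle| F_Z\left(z\right)\right)$, and since $F_Z$ is nondecreasing, ``$X$ SIPD in $Z$'' is equivalent to ``$v \mapsto \mathcal{C}_{X|Z}\left(u \middle| v\right)$ is nonincreasing for each $u$''; symmetrically ``$Z$ SIPD in $X$'' is equivalent to ``$u \mapsto \mathcal{C}_{Z|X}\left(v \middle| u\right)$ is nonincreasing for each $v$''. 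Both are statements about the copula $\mathcal{C}_{Z,X}$ of $\left(V, U\right)$. Exchangeability of $\mathcal{C}_{Z,X}$ means $\left(V, U\right) \underset{\mathrm{st}}{=} \left(U, V\right)$, so the regular conditional laws obey $\left[U \mid V = t\right] \underset{\mathrm{st}}{=} \left[V \mid U = t\right]$ for a.e. $t \in \left(0, 1\right)$, equivalently $\mathcal{C}_{X|Z}\left(\cdot \middle| t\right) = \mathcal{C}_{Z|X}\left(\cdot \middle| t\right)$. Hence the two monotonicity conditions coincide, and ``$Z$ SIPD in $X$'' yields ``$X$ SIPD in $Z$''.

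\emph{Main obstacle.} The calculations here are light; the only delicate point is the reduction in the second bullet — making precise that SIPD depends only on the copula (using solely continuity of $\left(Z, X\right)$) and that exchangeability of the copula interchanges the two conditioning directions while preserving monotonicity in the conditioning variable. I would argue this via regular conditional distributions of $\left(U, V\right)$, or, when a copula density exists, by differentiating $\mathcal{C}_{Z,X}\left(a, b\right) = \mathcal{C}_{Z,X}\left(b, a\right)$ to obtain $\partial_1 \mathcal{C}_{Z,X}\left(a, b\right) = \partial_2 \mathcal{C}_{Z,X}\left(b, a\right)$ and carefully tracking which argument the monotonicity refers to. Apart from that, one should confirm the marginal-invariance step only uses the standing continuity assumption on $\left(Z, X\right)$.
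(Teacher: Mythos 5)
Your proof is correct and follows essentially the same route as the paper's: the first bullet is handled by the identical stochastic-ordering argument on $z+Y$, and the second bullet likewise first establishes that $Z$ is SIPD in $X$ and then uses exchangeability of the copula to swap the conditioning direction (the paper does this via the partial-derivative formula $\partial\mathcal{C}_{Z,X}/\partial x$ for the conditional copula CDF, where you argue via regular conditional laws of $\left(U,V\right)$ — a cosmetic difference only).
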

\begin{proof}
For the first causal additive representation, as $Z$ and $Y$ are independent, we have equality in law
\begin{equation}
\left[X\middle|Z=z\right]\underset{\mathrm{st}}{=}z+Y.
\end{equation}
Thus
\begin{align}
\operatorname{Pr}\left(X>x\middle|Z=z\right) &= \operatorname{Pr}\left(Z+Y>x\middle|Z=z\right) \\
&= \operatorname{Pr}\left(Y>x-z\middle|Z=z\right) \\
&\leq \operatorname{Pr}\left(Y>x-z'\middle|Z=z'\right) \\
&= \operatorname{Pr}\left(X>x\middle|Z=z'\right).
\end{align}
for all $z' \geq z$ in the support of $Z$. As for the second causal additive representation, assume without loss of generality that $\left(Z, X\right)$ is a copula, and denote $\mathcal{C}_{Z, X}\left(z, x\right) = \operatorname{Pr}\left(Z \leq z, X \leq x\right)$. By analogous arguments to the first causal additive representation, we have
\begin{equation}
\operatorname{Pr}\left(Z>z\middle|X=x\right) \leq \operatorname{Pr}\left(Z>z\middle|X=x'\right)
\label{eq:sipd_Z_in_X}
\end{equation}
for all $z, x, x' \in \left(0, 1\right)$ such that $x \leq x'$. Due to exchangeability, $\mathcal{C}_{Z, X}\left(z, x\right) = \mathcal{C}_{Z, X}\left(x, z\right)$ and we have the copula conditional CDF (computed via \cite[\S 2.1.3]{Joe2014}) as
\begin{align}
\operatorname{Pr}\left(Z>z\middle|X=x\right) &= \dfrac{\partial\mathcal{C}_{Z, X}\left(z,x\right)}{\partial x} \\
&= \dfrac{\partial\mathcal{C}_{Z, X}\left(x,z\right)}{\partial x} \\
&= \operatorname{Pr}\left(X>z\middle|Z=x\right).
\end{align}
Performing the same for the right-hand side of \eqref{eq:sipd_Z_in_X}, we get
\begin{equation}
\operatorname{Pr}\left(X>z\middle|Z=x\right) \leq \operatorname{Pr}\left(X>z\middle|Z=x'\right)
\end{equation}
for all $z, x, x' \in \left(0, 1\right)$ such that $x \leq x'$, which is the required condition for \eqref{eq:sipd}.
\end{proof}

Note that many classes of parametric copulas (with one or two parameters) are exchangeable \cite[\S 2.15]{Joe2014}, as are all members of the Archimedean family of copulas \cite{Harder2016}, so the SIPD condition is satisfied for a variety of models with additive noise.

%

\subsection{Additive Noise Representation for Gaussian Copulas}

Suppose observations $Z$ are generated by additive Gaussian noise to Gaussian $X$ in the way of \eqref{eq:additive_noise_Z}. Specifically, let
\begin{equation}
Z = X + Y,
\end{equation}
where $X \sim \mathcal{N}\left(0, 1\right)$ and $Y \sim \mathcal{N}\left(0, \xi^{2}\right)$ independent of $X$. The variable $\xi^{2}$ may be interpreted here as the \textit{noise-to-signal ratio}. Note that $\left(Z, X\right)$ is then bivariate Gaussian with correlation $\rho = \left(1 + \xi^{2}\right)^{-1/2}$ and naturally, a bivariate Gaussian has a bivariate Gaussian copula. Hence this representation corresponds with the Gaussian copula success probability $p_{\mathrm{success}}^{\mathcal{NC}\left(1/\sqrt{1 + \xi^{2}}\right)}\left(n, m, \alpha\right)$, in terms of the noise-to-signal ratio.


\section{Randomised Selection Size}
\label{sec:randomised_selection_size}

Consider the alternative selection rule: to select all the candidates below the threshold $z_{m/n}^{*}$, where $z_{m/n}^{*}$ is the $100m/n$ percentile of $Z$. This selection rule gives a randomised selection size that is binomial distributed with parameters $n$ and $m/n$, and hence has an expectation of $m$, and asymptotic variance of $\lim_{n\to\infty}n\left(m/n\right)\left(1-m/n\right) = m$. The success probability with this alternative selection rule, defined in the analogous way to \eqref{eq:ord-opt-defn}, is equal to $1 - \left(1 - C_{Z,X}\left(m/n,\alpha\right)\right)^{n}$, which is explained as follows. Note that there will be a success if at least one of the candidates satisfies both $Z_{i} \leq z_{m/n}^{*}$ and $X_{i} \leq x_{\alpha}^{*}$; the former is the condition that it is selected the first place, and the latter is the condition it actually performs acceptably well. Thus, the failure probability is the probability that none of the samples satisfies both $Z_{i} \leq z_{m/n}^{*}$ and $X_{i} \leq x_{\alpha}^{*}$. Since
\begin{equation}
c := C_{Z,X}\left(m/n,\alpha\right) = \operatorname{Pr}\left(X\leq x_{\alpha}^{*}, Z \leq z_{m/n}^{*}\right),
\end{equation}
then the failure probability is $\left(1 - c\right)^{n}$, and so the success probability is $1 - \left(1 - c\right)^{n}$. We compute the limit of this success probability as $n \to \infty$ for fixed $m$. Assume without loss of generality that $\left(Z, X\right)$ is a copula. Then we have
\begin{align}
\lim_{n\to\infty}\left[1-\left(1-c\right)^{n}\right] &= \lim_{n\to\infty}\left[1-\left(1-\operatorname{Pr}\left(X\leq x_{\alpha}^{*}\middle|Z\leq z_{m/n}^{*}\right)\operatorname{Pr}\left(Z\leq z_{m/n}^{*}\right)\right)^{n}\right] \\
&= 1-\lim_{n\to\infty}\left[\left(1-\dfrac{\operatorname{Pr}\left(X\leq x_{\alpha}^{*}\middle|Z\leq z_{m/n}^{*}\right)m}{n}\right)^{n}\right] \\
&= 1-\exp\left(-m\lim_{n\to\infty}\operatorname{Pr}\left(X\leq x_{\alpha}^{*}\middle|Z\leq z_{m/n}^{*}\right)\right) \\
&= 1-\exp\left(-mC_{X|Z}\left(\alpha\middle|0\right)\right),
\end{align}
which depends on the copula conditional boundary CDF $C_{X|Z}\left(\alpha\middle|0\right) \leq 1$, thus
\begin{equation}
1-\exp\left(-mC_{X|Z}\left(\alpha\middle|0\right)\right)\leq1-e^{-m}
\end{equation}
which is always strictly less than one for finite $m$. On the other hand, the fixed size selection rule has the limiting success probability from Proposition \ref{thm:p_success_limiting}\ref{thm:p_success_limiting_n_to_infty} of
\begin{equation}
\lim_{n\to\infty}p_{\mathrm{success}}^{\mathcal{C}_{Z, X}}\left(n,m,\alpha\right)=1-\left(1-C_{X|Z}\left(\alpha\middle|0\right)\right)^{m},
\end{equation}
which can achieve one for finite $m$, provided $C_{X|Z}\left(\alpha\middle|0\right) = 1$. Furthermore, this is shown to be asymptotically strictly greater than that for the randomised selection size. Let $c_{\alpha} := C_{X|Z}\left(\alpha\middle|0\right)$, and use the fact that $\log\left(1 - c_{\alpha}\right) \leq -c_{\alpha}$ for all $c_{\alpha} \geq 0$ (refer to Lemma \ref{lem:log-concave-bounds}) so that
\begin{equation}
1 - e^{-mc_{\alpha}} \leq 1 - \left(1 - c_{\alpha}\right)^{m},
\end{equation} 
with equality if and only if $c_{\alpha} = 0$, i.e. both sides are equal to zero. However the lower bound in Proposition \ref{thm:general_bounds_OO} implies a strictly positive success probability, so it must be that
\begin{equation}
1 - e^{-mc_{\alpha}} < 1 - \left(1 - c_{\alpha}\right)^{m}.
\end{equation}

\section{Proofs of Main Results}
\label{sec:proofs}

\subsection{Proof of Theorem \ref{thm:p_success_m-dimensional_integral}}
Observe that
\begin{equation}
\min\left\{ X_{\left\langle 1\right\rangle },\dots,X_{\left\langle m\right\rangle }\right\} \leq x_{\alpha}^{*}
\end{equation}
if and only if
\begin{equation}
\min\left\{ F_{X}\left(X_{\left\langle 1\right\rangle }\right),\dots,F_{X}\left(X_{\left\langle m\right\rangle }\right)\right\} \leq\alpha.
\end{equation}
Moreover, the ordering of $F_{Z}\left(Z_{1}\right), \dots, F_{Z}\left(Z_{n}\right)$ compared to $Z_{1}, \dots, Z_{n}$ is unchanged. Therefore using the probability integral transform, we can assume without loss of generality that $\left(Z, X\right)$ is a copula distribution, so that $Z$ and $X$ are each $\operatorname{Uniform}\left(0, 1\right)$ distributed. With the law of total probability, write
\begin{multline}
p_{\mathrm{success}}^{F_{Z,X}}\left(n,m,\alpha\right) = \int_{0}^{1}\dots\int_{0}^{1}\operatorname{Pr}\left(\min\left\{ X_{\left\langle 1\right\rangle },\dots,X_{\left\langle m\right\rangle }\right\} \leq\alpha\middle|Z_{1:n}=z_{1},\dots,Z_{m:n}=z_{m}\right) \\
\times f_{Z_{1:n},\dots,Z_{m:n}}\left(z_{1},\dots,z_{m}\right)\mathrm{d}z_{1}\dots \mathrm{d}z_{m},
\label{eq:thm_psuccess_m-dimensional_integral_copula}
\end{multline}
where
\begin{equation}
f_{Z_{1:n},\dots,Z_{m:n}}\left(z_{1},\dots,z_{m}\right)=\dfrac{n!}{\left(n-m\right)!}\left(1-z_{m}\right)^{n-m}\mathbb{I}_{\left\{ z_{1}\leq\dots\leq z_{m}\right\} }
\label{eq:thm_psuccess_m-dimensional_integral_order_statistics_pdf}
\end{equation}
is the joint PDF of the first $m$ order statistics of the i.i.d. $\operatorname{Uniform}\left(0, 1\right)$ sample $\left(Z_{1}, \dots, Z_{n}\right)$, for which the form may be deduced via \cite[\S 2.2]{David2005}. Further note that since each pair $\left(Z_{i}, X_{i}\right)$ is sampled independently, then each $X_{\left\langle j \right\rangle}$ is conditionally independent of all the variables
\begin{equation}
\left(X_{\left\langle 1\right\rangle },\dots,X_{\left\langle j-1\right\rangle },X_{\left\langle j+1\right\rangle },\dots,X_{\left\langle m\right\rangle },Z_{1:n},\dots,Z_{\left(j-1\right):n},Z_{\left(j+1\right):n},\dots,Z_{m:n}\right),
\end{equation}
given $Z_{j:m}$. Denote the event $\mathcal{Z} = \left\{Z_{1:n}=z_{1},\dots,Z_{m:n}=z_{m}\right\}$ for brevity. Then
\begin{align}
\operatorname{Pr}\left(\min\left\{ X_{\left\langle 1\right\rangle },\dots,X_{\left\langle m\right\rangle }\right\} \leq\alpha\middle|\mathcal{Z}\right) &= 1-\operatorname{Pr}\left(X_{\left\langle 1\right\rangle }>\alpha,\dots,X_{\left\langle m\right\rangle }>\alpha\middle|\mathcal{Z}\right) \\
&= 1-\prod_{j=1}^{m}\operatorname{Pr}\left(X_{\left\langle j\right\rangle }>\alpha\middle|Z_{j:n}=z_{j}\right) \\
&= 1-\prod_{j=1}^{m}\operatorname{Pr}\left(X>\alpha\middle|Z=z_{j}\right) \\
&= 1-\prod_{j=1}^{m}\left(1-\operatorname{Pr}\left(X\leq\alpha\middle|Z=z_{j}\right)\right) \\
&= 1-\prod_{j=1}^{m}\left(1-\mathcal{C}_{X|Z}\left(\alpha\middle|z_{j}\right)\right).\label{eq:thm_psuccess_m-dimensional_integral_conditional_prob}
\end{align}
Plugging \eqref{eq:thm_psuccess_m-dimensional_integral_order_statistics_pdf} and \eqref{eq:thm_psuccess_m-dimensional_integral_conditional_prob} into \eqref{eq:thm_psuccess_m-dimensional_integral_copula}, we have the result claimed. \qed

\subsection{Proof of Proposition \ref{thm:optimality_horse_race}}

Starting from \eqref{eq:thm_psuccess_m-dimensional_integral_copula} and applying \eqref{eq:thm_psuccess_m-dimensional_integral_conditional_prob}, we have
\begin{align}
p_{\mathrm{success}}^{\mathcal{C}_{Z, X}}\left(n,m,\alpha\right) &= \begin{multlined}[t] \int_{0}^{1}\dots\int_{0}^{1}\operatorname{Pr}\left(\min\left\{ X_{\left\langle 1\right\rangle },\dots,X_{\left\langle m\right\rangle }\right\} \leq\alpha\middle|Z_{1:n}=z_{1},\dots,Z_{m:n}=z_{m}\right) \\
\times f_{Z_{1:n},\dots,Z_{m:n}}\left(z_{1},\dots,z_{m}\right)\mathrm{d}z_{1}\dots \mathrm{d}z_{m}
\end{multlined} \\
&= 1-\mathbb{E}_{Z_{1:n},\dots,Z_{m:n}}\left[\prod_{j=1}^{m}\operatorname{Pr}\left(X>\alpha\middle|Z=Z_{j:n}\right)\right].
\end{align}
Consider some arbitrary ordered selection of size $m$ from $\left(Z_{1}, \dots, Z_{n}\right)$, denoted $\left(Z_{1}', \dots, Z_{m}'\right)$. Then we have multivariate stochastic dominance \cite[\S 6.B.1]{Shaked2007}
\begin{equation}
\left(Z_{1:n},\dots,Z_{m:n}\right)\underset{\mathrm{st}}{\preceq}\left(Z_{1}',\dots,Z_{m}'\right),
\end{equation}
since
\begin{equation}
Z_{1:n} \leq Z_{1}', \quad Z_{2:n} \leq Z_{2}', \quad \dots,\quad Z_{m:n} \leq Z_{m}'
\end{equation}
for every realisation of $\left(Z_{1}, \dots, Z_{n}\right)$, so \cite[Theorem 6.B.1]{Shaked2007} holds. Applying the SIPD condition, $\prod_{j=1}^{m}\operatorname{Pr}\left(X>\alpha\middle|Z=z_{j}\right)$ is a non-decreasing function in $\left(z_{1}, \dots, z_{m}\right)$, so by the definition of stochastic dominance \cite[\S 6.B.1]{Shaked2007}, we have
\begin{equation}
\mathbb{E}_{Z_{1:n},\dots,Z_{m:n}}\left[\prod_{j=1}^{m}\operatorname{Pr}\left(X>\alpha\middle|Z=Z_{j:n}\right)\right]\leq\mathbb{E}_{Z_{1}',\dots,Z_{m}'}\left[\prod_{j=1}^{m}\operatorname{Pr}\left(X>\alpha\middle|Z=Z_{j}'\right)\right].
\end{equation}
Hence the success probability is maximised when $\left(Z_{1}', \dots, Z_{m}'\right)$ is chosen as the first $m$ order statistics. \qed

\subsection{Proof of Proposition \ref{thm:monotonicity_OO}}

For the proof of Proposition \ref{thm:monotonicity_OO}, we require the following lemma.
\begin{lemma}
Let $\mathbf{Z}_{\left[m\right]:n} := \left(Z_{1:n}, \dots, Z_{m:n}\right)$ denote the joint first $m$ order statistics of an i.i.d. sample of size $n$ from parent distribution $Z$. Then
\begin{equation}
\mathbf{Z}_{\left[m\right]:\left(n + 1\right)} \underset{\mathrm{st}}{\preceq} \mathbf{Z}_{\left[m\right]:n}.
\label{eq:stoch-dom-nplus1}
\end{equation}
\label{lem:ord-stat-stoch-dom-increasing-n}
\end{lemma}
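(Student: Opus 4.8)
The plan is to establish the multivariate stochastic dominance via an explicit coupling, invoking the standard characterisation \cite[Theorem 6.B.1]{Shaked2007}: if there exist random vectors $\widehat{\mathbf{U}}\underset{\mathrm{st}}{=}\mathbf{Z}_{\left[m\right]:\left(n+1\right)}$ and $\widehat{\mathbf{V}}\underset{\mathrm{st}}{=}\mathbf{Z}_{\left[m\right]:n}$, defined on a common probability space and satisfying $\widehat{\mathbf{U}} \leq \widehat{\mathbf{V}}$ (element-wise) almost surely, then $\mathbf{Z}_{\left[m\right]:\left(n+1\right)} \underset{\mathrm{st}}{\preceq} \mathbf{Z}_{\left[m\right]:n}$.

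First I would construct the coupling. Let $Z_{1}, \dots, Z_{n}, Z_{n+1}$ be i.i.d. copies of $Z$ on a single probability space. Build the size-$n$ sample from $\left(Z_{1}, \dots, Z_{n}\right)$ and the size-$\left(n+1\right)$ sample from $\left(Z_{1}, \dots, Z_{n}, Z_{n+1}\right)$. By construction, the vector of the first $m$ order statistics of the former has the law of $\mathbf{Z}_{\left[m\right]:n}$, and that of the latter has the law of $\mathbf{Z}_{\left[m\right]:\left(n+1\right)}$, so these serve as the required $\widehat{\mathbf{V}}$ and $\widehat{\mathbf{U}}$.

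Second I would verify the pointwise inequality $Z_{k:\left(n+1\right)} \leq Z_{k:n}$ for every $k \in \left\{1, \dots, m\right\}$ under this coupling. This rests on the elementary fact that appending an element to a finite multiset cannot increase any of its order statistics: the set $\left\{Z_{1}, \dots, Z_{n+1}\right\}$ contains at least as many entries not exceeding $Z_{k:n}$ as does $\left\{Z_{1}, \dots, Z_{n}\right\}$ (namely at least $k$ of them), hence its $k$\textsuperscript{th} smallest element is at most $Z_{k:n}$. Equivalently, inserting $Z_{n+1}$ into the sorted list either leaves the first $m$ positions unchanged or shifts some of them to strictly smaller values. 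Therefore $\mathbf{Z}_{\left[m\right]:\left(n+1\right)} \leq \mathbf{Z}_{\left[m\right]:n}$ almost surely, and the claim follows from the cited characterisation.

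The only points requiring any attention are confirming that the two coupled vectors genuinely have the asserted marginal laws — immediate, since any $n$ (respectively $n+1$) i.i.d. copies of $Z$ yield order statistics with the defining distribution — and stating the multiset monotonicity cleanly; neither constitutes a real obstacle, so the argument is short.
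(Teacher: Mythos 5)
Your proposal is correct and uses essentially the same argument as the paper: a coupling on a common probability space in which the first $m$ order statistics of the larger sample are element-wise no greater than those of the smaller sample, followed by an appeal to the characterisation of multivariate stochastic dominance in Theorem 6.B.1 of Shaked and Shanthikumar. The only cosmetic difference is that you append an $(n+1)$\textsuperscript{st} i.i.d.\ element to the size-$n$ sample, whereas the paper deletes a uniformly random element from the size-$(n+1)$ sample; both couplings yield the correct marginal laws and the same pointwise inequality.
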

\begin{proof}
Consider the following construction on the same probability space. Form an i.i.d. sample of size $n + 1$ from $Z$ and take the first $m$ order statistics. This will be equal in law to $\mathbf{Z}_{\left[m\right]:\left(n + 1\right)}$. Now delete one element uniformly at random, and re-compute the first $m$ order statistics. This will be equal in law to $\mathbf{Z}_{\left[m\right]:n}$. Moreover, for every realisation (denoted in lowercase), we have
\begin{equation}
\left(z_{1:\left(n + 1\right)}, \dots, z_{m:\left(n + 1\right)}\right) \leq \left(z_{1:n}, \dots, z_{m:n}\right).
\end{equation}
Therefore from the characterisation of stochastic dominance in \cite[Theorem 6.B.1]{Shaked2007}, \eqref{eq:stoch-dom-nplus1} holds. 
\end{proof}

For \ref{thm:monotonicity_OO_n}, it follows from Lemma \ref{lem:ord-stat-stoch-dom-increasing-n}, via the same technique and analogous arguments as in the proof of Proposition \ref{thm:optimality_horse_race}. \\

For \ref{thm:monotonicity_OO_m}, since
\begin{equation}
\min\left\{ X_{\left\langle 1\right\rangle },\dots,X_{\left\langle m\right\rangle }\right\} \leq\min\left\{ X_{\left\langle 1\right\rangle },\dots,X_{\left\langle m+1\right\rangle }\right\},
\end{equation}
then
\begin{align}
p_{\mathrm{success}}^{\mathcal{C}_{Z, X}}\left(n,m,\alpha\right) &= \operatorname{Pr}\left(\min\left\{ X_{\left\langle 1\right\rangle },\dots,X_{\left\langle m\right\rangle }\right\} \leq\alpha\right) \\
&\leq \operatorname{Pr}\left(\min\left\{ X_{\left\langle 1\right\rangle },\dots,X_{\left\langle m+1\right\rangle }\right\} \leq\alpha\right) \\
&= p_{\mathrm{success}}^{\mathcal{C}_{Z, X}}\left(n,m+1,\alpha\right).
\end{align}
For \ref{thm:monotonicity_OO_alpha}, by De Morgan's laws (i.e. complement of the union is the intersection of the complements), put the definition of $p_{\mathrm{success}}^{\mathcal{C}_{Z, X}}\left(n, m, \alpha\right)$ in terms of
\begin{equation}
p_{\mathrm{success}}^{\mathcal{C}_{Z, X}}\left(n, m, \alpha\right) = 1 - \operatorname{Pr}\left(\bigcap_{i = 1}^{m}\left\{X_{\left\langle i\right\rangle} > x_{\alpha}^{*}\right\}\right).
\end{equation}
Then apply the properties that $x_{\alpha}^{*}$ is non-decreasing in $\alpha$ and $\operatorname{Pr}\left(\bigcap_{i = 1}^{m}\left\{X_{\left\langle i\right\rangle} > x_{\alpha}^{*}\right\}\right)$ is non-increasing in $x_{\alpha}^{*}$. \qed

\subsection{Proof of Proposition \ref{thm:general_bounds_OO}}

For the lower bound, consider the success probability where the selection of size $m$ is uniformly random without replacement from $\left(Z_{1}, \dots, Z_{n}\right)$. This selection is identical in law to $\left(Z_{1}, \dots, Z_{m}\right)$, which are i.i.d. Then the success probability can be computed by
\begin{align}
\operatorname{Pr}\left(\min\left\{ X_{1},\dots,X_{m}\right\} \leq x_{\alpha^{*}}\right) &= 1-\prod_{j=1}^{m}\operatorname{Pr}\left(X_{j}>x_{\alpha}^{*}\right) \\
&= 1-\left(1-\alpha\right)^{m}.
\end{align}
Based on similar arguments provided from the proof of Proposition \ref{thm:optimality_horse_race}, it follows that this lower bounds $p_{\mathrm{success}}^{\mathcal{C}_{Z, X}}$. \\

For the upper bound, we can use the fact
\begin{equation}
\min_{i\in\left\{ 1,\dots,n\right\} }X_{i}\leq\min\left\{ X_{\left\langle 1\right\rangle },\dots,X_{\left\langle m\right\rangle }\right\}
\end{equation}
to bound
\begin{align}
p_{\mathrm{success}}^{\mathcal{C}_{Z, X}}\left(n,m,\alpha\right) &= \operatorname{Pr}\left(\min\left\{ X_{\left\langle 1\right\rangle },\dots,X_{\left\langle m\right\rangle }\right\}\leq x_{\alpha}^{*}\right) \\
&\leq \operatorname{Pr}\left(\min_{i\in\left\{ 1,\dots,n\right\} }X_{i}\leq x_{\alpha}^{*}\right) \\
&= 1-\prod_{i=1}^{n}\operatorname{Pr}\left(X_{i}>\alpha\right) \\
&= 1-\left(1-\alpha\right)^{n}.
\end{align}
\qed

\subsection{Proof of Proposition \ref{thm:p_success_limiting}}

For cases \ref{thm:p_success_limiting_m_equals_n}, \ref{thm:p_success_limiting_alpha_1}, \ref{thm:p_success_limiting_alpha_to_0}, the results are immediate from applying the general upper and lower bounds in Proposition \ref{thm:general_bounds_OO}. For \ref{thm:p_success_limiting_comonotonic}, assuming without loss of generality that $\left(Z, X\right)$ is a copula and applying the property of comotonicity, then
\begin{align}
p_{\mathrm{success}}^{\mathcal{C}_{Z, X}}\left(n,m,\alpha\right) &= \operatorname{Pr}\left(\min\left\{ X_{\left\langle 1\right\rangle },\dots,X_{\left\langle m\right\rangle }\right\} \leq\alpha\right) \\
&= \operatorname{Pr}\left(\min\left\{ Z_{1:n},\dots,Z_{m:n}\right\} \leq\alpha\right) \\
&= \operatorname{Pr}\left(Z_{1:n}\leq\alpha\right) \\
&= 1-\prod_{i=1}^{n}\operatorname{Pr}\left(Z_{i}>\alpha\right) \\
&= 1-\left(1-\alpha\right)^{n}.
\end{align}
For \ref{thm:p_success_limiting_independent}, by independence of $Z$ and $X$ we have from \eqref{eq:thm_psuccess_m-dimensional_integral_conditional_prob}:
\begin{align}
1-\prod_{j=1}^{m}\left(1-\operatorname{Pr}\left(X\leq\alpha\middle|Z=z_{j}\right)\right) &= 1-\prod_{j=1}^{m}\left(1-\operatorname{Pr}\left(X\leq\alpha\right)\right) \\
&= 1-\left(1-\alpha\right)^{m}.
\end{align}
This can be taken outside of the integral in \eqref{eq:thm_psuccess_m-dimensional_integral_copula} (where the integral evaluates to one), so the result follows. Case \ref{thm:p_success_limiting_m_to_infty} follows from the lower bound in Proposition \ref{thm:general_bounds_OO}. Lastly for \ref{thm:p_success_limiting_n_to_infty}, as $n \to \infty$, the joint density \eqref{eq:thm_psuccess_m-dimensional_integral_order_statistics_pdf} converges to the Dirac delta function in argument $z_{m}$. Considering the region of integration in the expression \eqref{eq:p_success_m-dimensional_integral} where $0 \leq z_{1} \leq \dots \leq z_{m}$, we have that $z_{m} = 0$ implies $z_{1} = \dots = z_{m - 1} = 0$. Therefore
\begin{align}
\lim_{n\to\infty}p_{\mathrm{success}}\left(n,m,\alpha\right) &= 1-\prod_{j=1}^{m}\left(1-\operatorname{Pr}\left(X\leq\alpha\middle|Z=0\right)\right) \\
&= 1 - \left(1 - \mathcal{C}_{X|Z}\left(\alpha\middle|0\right)\right)^{m}.
\end{align} \qed

\subsection{Proof of Theorem \ref{thm:constructed-lower-bound}}

For the proof of Theorem \ref{thm:constructed-lower-bound}, we require the following lemmas.

\begin{lemma}
We have
\begin{equation}
-p\log 4 \leq \log\left(1 - p\right) \leq -p,
\end{equation}
where the lower bound applies for all $p \in \left[0, \frac{1}{2}\right]$, and the upper bound applies for all $p \geq 0$.
\label{lem:log-concave-bounds}
\end{lemma}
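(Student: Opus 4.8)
The plan is to treat the two inequalities separately, each via an elementary property of the function $p \mapsto \log\left(1-p\right)$, whose second derivative $-1/\left(1-p\right)^{2}$ is negative, so it is strictly concave on $\left[0,1\right)$.

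For the upper bound $\log\left(1-p\right) \leq -p$, I would appeal to the standard inequality $\log\left(1+x\right) \leq x$ valid for all $x > -1$: setting $g\left(x\right) = x - \log\left(1+x\right)$ one has $g\left(0\right) = 0$ and $g'\left(x\right) = x/\left(1+x\right)$, so $g$ attains its global minimum $0$ at $x=0$. Substituting $x = -p$ for $p \in \left[0,1\right)$ gives the claim at once, with equality only at $p=0$ (and the inequality holding trivially as $p \to 1^{-}$).

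For the lower bound $\log\left(1-p\right) \geq -p\log 4$ on $\left[0,\tfrac{1}{2}\right]$, the key step is to recognise the affine function $\ell\left(p\right) := -p\log 4 = -2p\log 2$ as a chord of $h\left(p\right) := \log\left(1-p\right)$. Indeed, one checks the endpoint values $\ell\left(0\right) = 0 = h\left(0\right)$ and $\ell\left(\tfrac{1}{2}\right) = -\log 2 = \log\left(\tfrac{1}{2}\right) = h\left(\tfrac{1}{2}\right)$, so $\ell$ is precisely the secant line joining $\left(0, h\left(0\right)\right)$ to $\left(\tfrac{1}{2}, h\left(\tfrac{1}{2}\right)\right)$. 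Since a concave function lies on or above each of its chords, $h\left(p\right) \geq \ell\left(p\right)$ for all $p \in \left[0,\tfrac{1}{2}\right]$, which is the desired bound. There is no genuine obstacle in this argument; the only point meriting care is the endpoint computation $-\tfrac{1}{2}\log 4 = -\log 2$ that makes the linear bound tight at $p = \tfrac{1}{2}$, and the remark that $\left[0,\tfrac{1}{2}\right]$ is the largest such interval anchored at $0$, since beyond $\tfrac{1}{2}$ the line $\ell$ rises above $h$.
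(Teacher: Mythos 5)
Your proof is correct and follows essentially the same route as the paper: the upper bound is the tangent-line (first-order concavity) estimate at $p=0$, and the lower bound is the observation that $-p\log 4$ is the secant of $\log\left(1-p\right)$ through $p=0$ and $p=\tfrac{1}{2}$, below which a concave function cannot dip. Your version simply makes explicit the endpoint check $-\tfrac{1}{2}\log 4 = -\log 2$ and the domain restriction $p<1$ that the paper leaves implicit.
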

\begin{proof}
The lower bound can be established over $p \in \left[0, \frac{1}{2}\right]$ via concavity of $\log\left(1 - p\right)$, i.e. line secants lie below the graph. The upper bound can also be established over $p \geq 0$ via concavity.
\end{proof}

\begin{lemma}
For the Gaussian $Q$-function given by $Q\left(x\right) = 1 - \Phi\left(x\right)$, we have
\begin{equation}
\mathfrak{c}_{1}\exp\left(-\mathfrak{c}_{2}x^{2}\right) \leq Q\left(x\right) \leq \frac{1}{2}\exp\left(-\dfrac{x^{2}}{2}\right)
\end{equation}
over $x \geq 0$, with
\begin{gather}
\mathfrak{c}_{1} = \dfrac{1}{2} - \dfrac{\omega}{\pi} \\
\mathfrak{c}_{2} = \dfrac{\cot{\omega}}{\pi - 2\omega}
\end{gather}
for any $\omega \in \left(0, \frac{\pi}{2}\right)$.
\label{lem:q-function-bounds}
\end{lemma}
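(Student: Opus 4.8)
The upper bound is the classical Chernoff-type bound on the Gaussian tail: starting from $Q(x) = \operatorname{Pr}(N > x)$ with $N \sim \mathcal{N}(0,1)$, a standard argument via the Craig representation $Q(x) = \frac{1}{\pi}\int_{0}^{\pi/2}\exp\!\left(-\frac{x^{2}}{2\sin^{2}\theta}\right)\mathrm{d}\theta$ for $x \geq 0$, or simply the exponential Markov inequality $Q(x) \leq e^{-x^{2}/2}\,\mathbb{E}[e^{xN}]\cdot e^{-x^{2}/2}$ optimised appropriately, yields $Q(x) \leq \frac{1}{2}e^{-x^{2}/2}$; the factor $\frac12$ comes from noting $Q(0) = \frac12$ and that the Chernoff bound at $x=0$ is $1$, so one sharpens by integrating the Craig formula and bounding $\sin^{2}\theta \geq$ its value, but in fact the cleanest route is: $Q(x) = \frac{1}{\pi}\int_0^{\pi/2} e^{-x^2/(2\sin^2\theta)}\,\mathrm{d}\theta \leq \frac{1}{\pi}\int_0^{\pi/2} e^{-x^2/2}\,\mathrm{d}\theta = \frac{1}{2}e^{-x^2/2}$, using $\sin^2\theta \leq 1$. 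I would use this Craig-representation approach because it also delivers the lower bound.

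For the lower bound, I would again invoke the Craig representation and this time bound the integrand from below on a sub-interval. Fix $\omega \in (0,\pi/2)$ and restrict the integral to $\theta \in [\omega, \pi/2]$, discarding the (nonnegative) contribution from $[0,\omega]$. On $[\omega,\pi/2]$ we have $\sin^2\theta \geq \sin^2\omega$, but that would give a bound with $\csc^2\omega$ in the exponent rather than $\cot\omega$ divided by $(\pi - 2\omega)$ — so the exact shape $\mathfrak{c}_2 = \cot\omega/(\pi - 2\omega)$ suggests a slightly different manoeuvre. The natural thing is to use the substitution $u = \cot\theta$, under which $Q(x) = \frac{1}{\pi}\int_0^{\infty}\frac{e^{-x^2(1+u^2)/2}}{1+u^2}\,\mathrm{d}u$; alternatively, keep $\theta$ and use that on $[\omega, \pi/2 - (\omega)]$ — no. Let me reconsider: the exponent $\mathfrak{c}_2 x^2 = \frac{\cot\omega}{\pi - 2\omega}x^2$ is exactly what you get if you bound $\frac{1}{2\sin^2\theta}$ \emph{on average} by its mean value over a symmetric interval. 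Indeed, $\int \frac{\mathrm{d}\theta}{2\sin^2\theta} = -\frac{1}{2}\cot\theta$, so $\frac{1}{\pi - 2\omega}\int_{\omega}^{\pi-\omega}\frac{\mathrm{d}\theta}{2\sin^2\theta} = \frac{\cot\omega}{\pi - 2\omega} = \mathfrak{c}_2$. So the plan is: restrict the Craig integral to $[\omega,\pi/2]$ (equivalently, by symmetry of $1/\sin^2$ about $\pi/2$, work with $[\omega,\pi-\omega]$ and halve), then apply Jensen's inequality to the convex function $e^{-(\cdot)}$, pulling the average of the exponent outside: $\frac{1}{\pi-2\omega}\int_{\omega}^{\pi-\omega} e^{-\frac{x^2}{2\sin^2\theta}}\mathrm{d}\theta \geq \exp\!\left(-\frac{x^2}{\pi-2\omega}\int_{\omega}^{\pi-\omega}\frac{\mathrm{d}\theta}{2\sin^2\theta}\right) = e^{-\mathfrak{c}_2 x^2}$. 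Multiplying by the length $\pi - 2\omega$ and the $\frac{1}{\pi}$ prefactor, and accounting for the symmetric-interval halving, gives $Q(x) \geq \frac{\pi - 2\omega}{2\pi}e^{-\mathfrak{c}_2 x^2} = \left(\frac12 - \frac{\omega}{\pi}\right)e^{-\mathfrak{c}_2 x^2} = \mathfrak{c}_1 e^{-\mathfrak{c}_2 x^2}$, as claimed.

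The main obstacle will be getting the book-keeping on the integration interval and the symmetry factor exactly right, so that the discarded-interval estimate, the Jensen step, and the $1/\pi$ normalisation combine to produce precisely the constant $\mathfrak{c}_1 = \frac12 - \frac{\omega}{\pi}$ rather than something off by a factor of two; I would double-check this by testing the limiting cases $\omega \to 0^+$ (where $\mathfrak{c}_1 \to \frac12$, $\mathfrak{c}_2 \to \infty$, recovering a trivial bound) and $\omega \to \frac{\pi}{2}^-$ (where $\mathfrak{c}_1 \to 0$ and $\mathfrak{c}_2 \to $ a finite limit via L'Hôpital, so the bound degrades gracefully). One should also verify nonnegativity and that $x \geq 0$ is genuinely used (for $x < 0$ the lower bound fails since $Q(x) \to 1$). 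Everything else — the validity of the Craig representation, convexity of $e^{-t}$ for Jensen, the elementary antiderivative of $\csc^2$ — is routine.
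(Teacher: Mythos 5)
Your proof is correct, but it takes a different route from the paper: the paper does not prove this lemma at all, instead citing the lower bound to \cite[Equation (2)]{Wu2018} and the upper bound to \cite[Equation (5)]{Chiani2003}, whereas you reconstruct a self-contained derivation from the Craig representation $Q(x)=\frac{1}{\pi}\int_{0}^{\pi/2}\exp\bigl(-\frac{x^{2}}{2\sin^{2}\theta}\bigr)\,\mathrm{d}\theta$. Your two key steps both check out: the upper bound follows from $\sin^{2}\theta\leq 1$, and for the lower bound the combination of (i) discarding $[0,\omega]$, (ii) unfolding to the symmetric interval $[\omega,\pi-\omega]$ via $\sin^{2}(\pi-\theta)=\sin^{2}\theta$ so that $\int_{\omega}^{\pi/2}=\frac{1}{2}\int_{\omega}^{\pi-\omega}$, and (iii) Jensen applied to the convex map $t\mapsto e^{-t}$ with $\frac{1}{\pi-2\omega}\int_{\omega}^{\pi-\omega}\frac{\mathrm{d}\theta}{2\sin^{2}\theta}=\frac{\cot\omega}{\pi-2\omega}=\mathfrak{c}_{2}$ produces exactly the prefactor $\frac{\pi-2\omega}{2\pi}=\frac{1}{2}-\frac{\omega}{\pi}=\mathfrak{c}_{1}$, with no factor-of-two slippage. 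The exposition meanders (the garbled Chernoff remark and the abandoned $\sin^{2}\theta\geq\sin^{2}\omega$ attempt should be cut), but the final argument is sound and is essentially the derivation underlying the cited sources; what your version buys is that the lemma becomes verifiable without consulting \cite{Wu2018} or \cite{Chiani2003}, at the cost of having to justify the Craig representation itself (valid only for $x\geq 0$, which is all the lemma claims).
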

\begin{proof}
The lower bound is due to \cite[Equation (2)]{Wu2018} and the upper bound is found in \cite[Equation (5)]{Chiani2003}.
\end{proof}

\begin{lemma}
Let $Z_{1:n}$ denote the first order statistic of an i.i.d. standard Gaussian sample of size $n$. For any $\omega \in \left(0, \frac{\pi}{2}\right)$, let
\begin{gather}
\mathfrak{c}_{1} = \dfrac{1}{2} - \dfrac{\omega}{\pi} \\
\mathfrak{c}_{2} = \dfrac{\cot{\omega}}{\pi - 2\omega}.
\end{gather}
Consider $\widehat{Z}_{1:n} \sim \mathcal{N}\left(\mu_{n}, \sigma_{n}^{2}\right)$ where
\begin{gather}
\mu_{n} = -\sqrt{\dfrac{\log\left(n\mathfrak{c}_{1}\right)}{\mathfrak{c}_{2}}} \label{eq:mu-constructed2} \\
\sigma_{n}^{2} = \dfrac{-\log\log 2}{2\mathfrak{c}_{2}\left(\log\left(n\mathfrak{c}_{1}\right) - \log\log 2\right)}. \label{eq:sigma-constructed2}
\end{gather}
Then there exists some integer $\widetilde{n}\left(\omega\right)$ such that for all $n \geq \widetilde{n}\left(\omega\right)$, we have $Z_{1:n} \underset{\mathrm{st}}{\preceq} \widehat{Z}_{1:n}$, i.e. $\widehat{Z}_{1:n}$ stochastically dominates $Z_{1:n}$.
\label{lem:stoch-dom-constructed}
\end{lemma}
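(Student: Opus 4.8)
The plan is to recast the stochastic dominance $Z_{1:n}\underset{\mathrm{st}}{\preceq}\widehat{Z}_{1:n}$ as the single pointwise inequality between survival functions
\[
\left[Q\left(t\right)\right]^{n}=\operatorname{Pr}\left(Z_{1:n}>t\right)\;\leq\;\operatorname{Pr}\left(\widehat{Z}_{1:n}>t\right)=Q\!\left(\frac{t-\mu_{n}}{\sigma_{n}}\right),\qquad t\in\mathbb{R},
\]
and to establish it for all $n$ beyond an explicit, $\omega$‑dependent threshold $\widetilde{n}\left(\omega\right)$. Before the casework I would record the two identities that \eqref{eq:mu-constructed2}--\eqref{eq:sigma-constructed2} are designed to furnish. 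Plugging $\mu_{n}$ into the Wu lower bound of Lemma \ref{lem:q-function-bounds} gives $\mathfrak{c}_{1}\exp\left(-\mathfrak{c}_{2}\mu_{n}^{2}\right)=\mathfrak{c}_{1}/\left(n\mathfrak{c}_{1}\right)=1/n$, hence $Q\left(\left|\mu_{n}\right|\right)\geq 1/n$, $\mathfrak{c}_{2}\mu_{n}^{2}=\log\left(n\mathfrak{c}_{1}\right)$, and $\left[Q\left(\mu_{n}\right)\right]^{n}=\left(1-Q\left(\left|\mu_{n}\right|\right)\right)^{n}\leq\left(1-1/n\right)^{n}\leq e^{-1}$; moreover the specific $\sigma_{n}^{2}$ of \eqref{eq:sigma-constructed2} satisfies $\tfrac{1}{2\sigma_{n}^{2}}-\mathfrak{c}_{2}=\mathfrak{c}_{2}\log\left(n\mathfrak{c}_{1}\right)/\left(-\log\log 2\right)$, which will make a certain quadratic a perfect square. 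Finally, since $1-2^{-1/n}=1-e^{-\left(\log 2\right)/n}\leq\left(\log 2\right)/n<1/n$ (using $1-e^{-x}\leq x$, Lemma \ref{lem:log-concave-bounds}), the median $t_{n}^{\star}$ of $Z_{1:n}$, defined by $\left[Q\left(t_{n}^{\star}\right)\right]^{n}=\tfrac12$, obeys $Q\left(-t_{n}^{\star}\right)=1-2^{-1/n}<1/n\leq Q\left(\left|\mu_{n}\right|\right)$, so $t_{n}^{\star}<\mu_{n}<0$.

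With these in hand I would split $\mathbb{R}$ at the points $t_{n}^{\star}$, $\mu_{n}$, $\mu_{n}+\beta\sigma_{n}$ and $0$, where $\beta\in\left(0,1\right)$ is the constant with $Q\left(\beta\right)=e^{-1}$. On the band $t_{n}^{\star}\leq t\leq\mu_{n}$ the claim is immediate, since $\left[Q\left(t\right)\right]^{n}\leq\left[Q\left(t_{n}^{\star}\right)\right]^{n}=\tfrac12\leq Q\!\left(\left(t-\mu_{n}\right)/\sigma_{n}\right)$ because $t-\mu_{n}\leq 0$. On the short band $\mu_{n}\leq t\leq\mu_{n}+\beta\sigma_{n}$ it is likewise immediate: $\left[Q\left(t\right)\right]^{n}\leq\left[Q\left(\mu_{n}\right)\right]^{n}\leq e^{-1}=Q\left(\beta\right)\leq Q\!\left(\left(t-\mu_{n}\right)/\sigma_{n}\right)$.

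The deep left tail $t\leq t_{n}^{\star}$ is where the construction of $\sigma_{n}^{2}$ pays off. Writing $\left[Q\left(t\right)\right]^{n}=\left(1-Q\left(-t\right)\right)^{n}\leq\exp\left(-nQ\left(-t\right)\right)$ (Lemma \ref{lem:log-concave-bounds}), then using $1-e^{-y}\geq y/\left(2\log 2\right)$ on $\left(0,\log 2\right]$ together with $nQ\left(-t\right)\leq n\left(1-2^{-1/n}\right)<\log 2$, one gets $1-\left[Q\left(t\right)\right]^{n}\geq nQ\left(-t\right)/\left(2\log 2\right)\geq n\mathfrak{c}_{1}e^{-\mathfrak{c}_{2}t^{2}}/\left(2\log 2\right)$ via the Wu lower bound, while $\Phi\!\left(\left(t-\mu_{n}\right)/\sigma_{n}\right)=Q\!\left(\left(\mu_{n}-t\right)/\sigma_{n}\right)\leq\tfrac12\exp\left(-\left(\mu_{n}-t\right)^{2}/\left(2\sigma_{n}^{2}\right)\right)$ by the Chernoff upper bound (valid as $\mu_{n}-t\geq 0$). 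Comparing the two and substituting $w=\left|t\right|-\left|\mu_{n}\right|\geq\left|t_{n}^{\star}\right|-\left|\mu_{n}\right|>0$, and using $\mathfrak{c}_{2}t^{2}-\log\left(n\mathfrak{c}_{1}\right)=\mathfrak{c}_{2}w\left(w+2\left|\mu_{n}\right|\right)$, the required inequality becomes
\[
\left(\frac{1}{2\sigma_{n}^{2}}-\mathfrak{c}_{2}\right)w^{2}-2\mathfrak{c}_{2}\left|\mu_{n}\right|w-\log\log 2\;\geq\;0,
\]
and the choice \eqref{eq:sigma-constructed2} is exactly the one that makes this quadratic's discriminant vanish (its leading coefficient is positive and its constant term $-\log\log 2>0$), so it holds for every $w$. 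For the remaining region $t\geq\mu_{n}+\beta\sigma_{n}$ I would, for $\mu_{n}+\beta\sigma_{n}\leq t\leq 0$, bound $\left[Q\left(t\right)\right]^{n}\leq\exp\left(-n\mathfrak{c}_{1}e^{-\mathfrak{c}_{2}t^{2}}\right)=\exp\left(-e^{2j\mathfrak{c}_{2}\sigma_{n}\left|\mu_{n}\right|-j^{2}\mathfrak{c}_{2}\sigma_{n}^{2}}\right)$ after writing $t=\mu_{n}+j\sigma_{n}$ and using $\mathfrak{c}_{2}\mu_{n}^{2}=\log\left(n\mathfrak{c}_{1}\right)$, and for $t\geq 0$ bound $\left[Q\left(t\right)\right]^{n}\leq 2^{-n}e^{-nt^{2}/2}$; against the Wu lower bound $Q\!\left(\left(t-\mu_{n}\right)/\sigma_{n}\right)\geq\mathfrak{c}_{1}e^{-\mathfrak{c}_{2}\left(t-\mu_{n}\right)^{2}/\sigma_{n}^{2}}$ (and against the elementary value $Q\left(\beta\right)=e^{-1}$ near the left edge), these reduce to an inequality of the form $e^{2j\mathfrak{c}_{2}\sigma_{n}\left|\mu_{n}\right|-j^{2}\mathfrak{c}_{2}\sigma_{n}^{2}}\geq\log\!\left(1/Q\!\left(\left(t-\mu_{n}\right)/\sigma_{n}\right)\right)$ and to a quadratic‑in‑$t$ inequality, both of which hold once $n$ is large (the quadratic because its $t^{2}$‑coefficient $\tfrac{n}{2}-\mathfrak{c}_{2}/\sigma_{n}^{2}$ becomes positive and dominates the $O\!\left(\left(\log n\right)^{2}\right)$ constant term). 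Taking $\widetilde{n}\left(\omega\right)$ to be the largest of the finitely many explicit lower bounds on $n$ thereby produced finishes the proof.

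I expect the genuine obstacle to be that last region, the part of $\left[\mu_{n},\infty\right)$ just to the right of the short band $\left[\mu_{n},\mu_{n}+\beta\sigma_{n}\right]$: there $\left[Q\left(t\right)\right]^{n}$ is only of order $e^{-O\left(1\right)}$ rather than small, so replacing $Q\!\left(\left(t-\mu_{n}\right)/\sigma_{n}\right)$ by its Wu lower bound throws away the factor $1/\mathfrak{c}_{1}$, which is large when $\omega$ is near $\tfrac{\pi}{2}$, and the naive estimate fails to close near the edge. Handling it needs either an additional $\mathfrak{c}_{1}$‑free lower bound for $Q$ on moderate arguments, or a continuation argument: the deficit $e^{2j\mathfrak{c}_{2}\sigma_{n}\left|\mu_{n}\right|-j^{2}\mathfrak{c}_{2}\sigma_{n}^{2}}-\log\left(1/Q\left(j\right)\right)$ is positive at $j=\beta$ and its derivative is eventually positive (the left‑hand exponential, of a concave parabola in $j$ peaking at $j=\left|\mu_{n}\right|/\sigma_{n}$ with value $\log\left(n\mathfrak{c}_{1}\right)$, outgrows the Gaussian hazard rate $\phi\left(j\right)/Q\left(j\right)$ once $n$ is large), so for $n$ past a threshold the deficit stays positive. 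The left tail, by contrast, is genuinely clean thanks to the calibration of $\sigma_{n}^{2}$, and the two free bands cost nothing.
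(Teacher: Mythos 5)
Your proposal follows essentially the same route as the paper's proof: recast $Z_{1:n}\underset{\mathrm{st}}{\preceq}\widehat{Z}_{1:n}$ as the pointwise survival-function inequality \eqref{eq:q-function-stoch-dom}, combine the Wu lower bound and Chiani upper bound on $Q$ (Lemma \ref{lem:q-function-bounds}) with the log-concavity estimates of Lemma \ref{lem:log-concave-bounds}, and observe that \eqref{eq:sigma-constructed2} is calibrated precisely so that the resulting left-tail quadratic has vanishing discriminant. Your decomposition is finer, and in two respects strictly better. First, splitting off the free bands $[t_{n}^{\star},\mu_{n}]$ and $[\mu_{n},\mu_{n}+\beta\sigma_{n}]$ (median and $Q(\beta)=e^{-1}$) costs nothing and lets you run the left-tail comparison only where $nQ(-t)\leq\log 2$, which makes the perfect-square argument clean. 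Second, the buffer band repairs a real weakness in the paper's treatment of $[\mu_{n},0]$: the paper's sufficient condition there reads $n\mathfrak{c}_{1}e^{-\mathfrak{c}_{2}z^{2}}\geq\mathfrak{c}_{2}\left(\frac{z-\mu_{n}}{\sigma_{n}}\right)^{2}-\log\mathfrak{c}_{1}$, and its ``fix $z$, compare $O(n)$ to $O((\log n)^{2})$'' justification is not uniform in $z$ — at $z=\mu_{n}$ the left side equals exactly $1$ while the right side equals $-\log\mathfrak{c}_{1}$, so the sufficient condition fails outright whenever $\mathfrak{c}_{1}<e^{-1}$ (this is why Algorithm \ref{alg:test} checks \eqref{eq:q-function-stoch-dom} numerically on $[\mu_{n},0]$ rather than relying on the analytic bound). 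Your band $[\mu_{n},\mu_{n}+\beta\sigma_{n}]$ covers exactly that failure point by an elementary comparison.

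The one place you stop short is the same place the paper is weakest: the region $[\mu_{n}+\beta\sigma_{n},0]$, where replacing $Q(j)$ by $\mathfrak{c}_{1}e^{-\mathfrak{c}_{2}j^{2}}$ discards too much when $\omega$ is near $\pi/2$. Your diagnosis is right, and the continuation argument you sketch does close for the purposes of this lemma (which only asserts existence of $\widetilde{n}(\omega)$): since $\mathfrak{c}_{2}\sigma_{n}\left|\mu_{n}\right|\to\sqrt{-\tfrac{1}{2}\log\log 2}$ and $\mathfrak{c}_{2}\sigma_{n}^{2}\to 0$, the left-hand side $\exp\bigl(e^{2j\mathfrak{c}_{2}\sigma_{n}\left|\mu_{n}\right|-j^{2}\mathfrak{c}_{2}\sigma_{n}^{2}}\bigr)^{-1}$ converges uniformly on compact $j$-sets to a quantity that beats $Q(j)^{-1}$'s logarithm exponentially versus quadratically, while for $j$ beyond a fixed $j_{0}(\omega)$ the crude Wu comparison already suffices; one must still verify the strict limit inequality on $[\beta,j_{0}]$, which takes a short additional estimate you have not written out. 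On $[0,\infty)$ your quadratic-discriminant condition is the paper's \eqref{eq:right-interval-stoch-dom} and is fine. So: same method, a tighter decomposition that fixes the left edge of the troublesome interval, and one honestly flagged step that needs the extra page of work the paper itself avoids by numerical certification.
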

\begin{proof}

If $\widehat{Z}_{1:n}$ stochastically dominates $Z_{1:n}$, then by definition $\operatorname{Pr}\left(\widehat{Z}_{1:n} \geq z\right) \geq \operatorname{Pr}\left(Z_{1:n} \geq z\right)$ for all $z \in \mathbb{R}$. Or in terms of the Gaussian $Q$-function $Q\left(z\right) = 1 - \Phi\left(z\right)$, we require
\begin{equation}
Q\left(z\right)^{n} \leq Q\left(\dfrac{z - \mu_{n}}{\sigma_{n}}\right)
\label{eq:q-function-stoch-dom}
\end{equation}
for all $z \in \mathbb{R}$, where the left-hand side follows from well-known form of the distribution for the first order statistic \cite[Equation (2.2.11)]{Arnold2008}. The idea is to show that this bound holds over three different intervals whose union is $\mathbb{R}$, being $\left(-\infty, \mu_{n}\right]$, $\left[\mu_{n}, 0\right]$ and $\left[0, \infty\right)$. We begin with $z \in \left(-\infty, \mu_{n}\right]$. Since $\mu_{n} \leq 0$, then via the lower bound in Lemma \ref{lem:q-function-bounds}
\begin{equation}
Q\left(z\right)^{n} \leq \left(1 - \mathfrak{c}_{1}\exp\left(-\mathfrak{c}_{2}z^{2}\right)\right)^{n}.
\end{equation}
Since $0 \leq \mathfrak{c}_{1}\exp\left(-\mathfrak{c}_{2}z^{2}\right) \leq 1/2$, then putting $p = \mathfrak{c}_{1}\exp\left(-\mathfrak{c}_{2}z^{2}\right)$ in the upper bound from Lemma \ref{lem:log-concave-bounds}, we get
\begin{align}
Q\left(z\right)^{n} &\leq \exp\left(-n\mathfrak{c}_{1}\exp\left(-\mathfrak{c}_{2}z^{2}\right)\right) \\
&= \exp\left(-\exp\left(-\left(\mathfrak{c}_{2}z^{2}-\log\left(n\mathfrak{c}_{1}\right)\right)\right)\right).
\end{align}
Now using the upper bound in Lemma \ref{lem:q-function-bounds}, we have for $z \leq \mu_{n}$:
\begin{equation}
1-\dfrac{1}{2}\exp\left(-\dfrac{\left(z-\mu_{n}\right)^{2}}{2\sigma_{n}^{2}}\right)\leq Q\left(\dfrac{z-\mu_{n}}{\sigma_{n}}\right).
\end{equation}
The lower bound in Lemma \ref{lem:log-concave-bounds} implies $\exp\left(-p\log 4\right) \leq 1 - p$. Applying this with $p = \frac{1}{2}\exp\left(-\frac{\left(z-\mu_{n}\right)^{2}}{2\sigma_{n}^{2}}\right)$ and after some manipulation, we arrive at
\begin{equation}
Q\left(\dfrac{z-\mu_{n}}{\sigma_{n}}\right) \geq \exp\left(-\exp\left(-\left(\dfrac{\left(z-\mu_{n}\right)^{2}}{2\sigma_{n}^{2}}-\log\log2\right)\right)\right).
\end{equation}
Thus a sufficient condition for $Q\left(z\right)^{n} \leq Q\left(\frac{z - \mu_{n}}{\sigma_{n}}\right)$ over $z \in \left(-\infty, \mu_{n}\right]$ is
\begin{equation}
\exp\left(-\exp\left(-\left(\mathfrak{c}_{2}z^{2}-\log\left(n\mathfrak{c}_{1}\right)\right)\right)\right) \leq \exp\left(-\exp\left(-\left(\dfrac{\left(z-\mu_{n}\right)^{2}}{2\sigma_{n}^{2}}-\log\log2\right)\right)\right)
\end{equation}
or equivalently,
\begin{equation}
\left(\dfrac{1}{\sigma_{n}^{2}}-2\mathfrak{c}_{2}\right)z^{2}-2\dfrac{\mu_{n}}{\sigma_{n}^{2}}z+\dfrac{\mu_{n}^{2}}{\sigma_{n}^{2}}+2\log\left(n\mathfrak{c}_{1}\right)-2\log\log2\geq0.
\end{equation}
The roots of this quadratic are at
\begin{equation}
z = \dfrac{\mu_{n}\pm\sqrt{\mu_{n}^{2}-\left(1-2\mathfrak{c}_{2}\sigma_{n}^{2}\right)\left(\mu_{n}^{2}+2\sigma_{n}^{2}\log\left(n\mathfrak{c}_{1}\right)-2\sigma_{n}^{2}\log\log2\right)}}{1-2\mathfrak{c}_{2}\sigma_{n}^{2}}
\end{equation}
with discriminant $\Delta$ calculated by
\begin{equation}
\Delta = \sigma_{n}^{4}\left(4\mathfrak{c}_{2}\log\left(n\mathfrak{c}_{1}\right)-4\mathfrak{c}_{2}\log\log2\right)+\sigma_{n}^{2}\left(2\mathfrak{c}_{2}\mu_{n}^{2}-2\log\left(n\mathfrak{c}_{1}\right)+2\log\log2\right).
\end{equation}
Under the same choice of $\omega$, note $2\mathfrak{c}_{2}\mu_{n}^{2}=2\log\left(n\mathfrak{c}_{1}\right)$ and the discriminant becomes
\begin{equation}
\Delta=\sigma_{n}^{4}\left(4\mathfrak{c}_{2}\log\left(n\mathfrak{c}_{1}\right)-4\mathfrak{c}_{2}\log\log2\right)+\sigma_{n}^{2}\left(2\log\log2\right).
\end{equation}
The quadratic inequality is satisfied everywhere if the discriminant is non-positive, so put $\Delta = 0$ and taking the positive solution for $\sigma_{n}^{2}$, giving
\begin{equation}
\sigma_{n}^{2} = \dfrac{-\log\log 2}{2\mathfrak{c}_{2}\left(\log\left(n\mathfrak{c}_{1}\right) - \log\log 2\right)}.
\end{equation}
Therefore the inequality is satisfied provided $n\mathfrak{c}_{1} > 1$, which occurs for sufficiently large $n$, since $\mathfrak{c}_{1} > 0$. Next we show that the stochastic dominance condition is satisfied for $z \in \left[\mu_{n}, 0\right]$, under the proposed choice of $\mu_{n}$ and $\sigma_{n}$ above. Over this interval, we can use the same upper bound on $Q\left(z\right)^{n}$ as before, and now we have the lower bound
\begin{equation}
Q\left(\dfrac{z-\mu_{n}}{\sigma_{n}}\right) \geq \exp\left(-\left(\mathfrak{c}_{2}\left(\dfrac{z-\mu_{n}}{\sigma_{n}}\right)^{2}-\log \mathfrak{c}_{1}\right)\right).
\end{equation}
Thus we want to show that
\begin{equation}
n\mathfrak{c}_{1}\exp\left(-\mathfrak{c}_{2}z^{2}\right)\geq \mathfrak{c}_{2}\left(\dfrac{z-\mu_{n}}{\sigma_{n}}\right)^{2}-\log \mathfrak{c}_{1}.
\end{equation}
Fix $z$, and recognise that $\frac{\mu_{n}^{2}}{\sigma_{n}^{2}} = O\left(\left(\log n\right)^{2}\right)$ in the right-hand side, while the left-hand side is $O\left(n\right)$. Therefore
\begin{equation}
O\left(n\right) \geq \left(\left(\log n\right)^{2}\right)
\end{equation}
since $O\left(e^{n}\right) \geq O\left(n^{2}\right)$. Lastly for the interval $z \in \left[0, \infty\right)$, we use the upper bound in Lemma \ref{lem:q-function-bounds} to give
\begin{equation}
Q\left(z\right)^{n} \leq \dfrac{1}{2^{n}}\exp\left(-\dfrac{nz^{2}}{2}\right)
\end{equation}
and we can use the same lower bound as in the preceding interval. In the same vein as above, we want to show
\begin{equation}
\left(\dfrac{n}{2}-\dfrac{\mathfrak{c}_{2}}{\sigma_{n}^{2}}\right)z^{2}-\dfrac{2\mathfrak{c}_{2}\mu_{n}}{\sigma_{n}^{2}}+n\log2+\dfrac{\mu_{n}^{2}}{\sigma_{n}^{2}}-\log \mathfrak{c}_{1}\geq0.
\end{equation}
The discriminant of the quadratic is non-positive when
\begin{equation}
\left(\dfrac{n}{2}-\dfrac{\mathfrak{c}_{2}}{\sigma_{n}^{2}}\right)\left(n\log2+\dfrac{\mu_{n}^{2}}{\sigma_{n}^{2}}-\log \mathfrak{c}_{1}\right)\geq\dfrac{\mathfrak{c}_{2}^{2}\mu_{n}^{2}}{\sigma_{n}^{4}}.
\label{eq:right-interval-stoch-dom}
\end{equation}
The left-hand side is $O\left(n^{2}\right)$ and the right-hand side is $O\left(\left(\log n\right)^{3}\right)$, thus this inequality is also satisfied for sufficiently large $n$.

\end{proof}


To establish the right inequality in \eqref{eq:success-prob-copula-lb}, assume without loss in generality that $\left(Z, X\right)$ are bivariate standard Gaussian with correlation $\rho$, as in \eqref{eq:bivariate-gaussian-copula}. By establishing $Z_{1:n} \underset{\mathrm{st}}{\preceq} \widehat{Z}_{1:n}$ via Lemma \ref{lem:stoch-dom-constructed}, then via the same technique as in the proof of Proposition \ref{thm:optimality_horse_race}, we have
\begin{align}
p_{\mathrm{success}}^{\mathcal{NC}\left(p\right)}\left(n,1,\alpha\right) &= \int_{-\infty}^{\infty}\operatorname{Pr}\left(X\leq\Phi^{-1}\left(\alpha\right)\middle|Z=z\right)f_{Z_{1:n}}\left(z\right)\mathrm{d}z \\
&\geq \int_{-\infty}^{\infty}\operatorname{Pr}\left(X\leq\Phi^{-1}\left(\alpha\right)\middle|Z=z\right)f_{\widehat{Z}_{1:n}}\left(z\right)\mathrm{d}z, \label{eq:marginalisation_integral}
\end{align}
where $f_{Z_{1:n}}\left(\cdot\right)$ is the density of the first order statistic of the standard Gaussian, and $f_{\widehat{Z}_{1:n}}\left(\cdot\right)$ is the density of $\mathcal{N}\left(\mu_{n}, \sigma^{2}_{n}\right)$. Using well-known conditioning formulae for Gaussians \cite[Equation (A.6)]{Rasmussen2006}, we also have
\begin{equation}
\left[X\middle|Z=z\right]\sim\mathcal{N}\left(\rho z,1-\rho^{2}\right).
\end{equation}
Thus, the integral \eqref{eq:marginalisation_integral} may be computed analytically with well-known marginalisation formulae for linear-Gaussian systems \cite[Theorem 2.3.1]{Krishnamurthy2016}. In particular, if $\widehat{Z}_{1:n} \sim \mathcal{N}\left(\mu_{n}, \sigma^{2}_{n}\right)$ and $\left[X\middle|\widehat{Z}_{1:n}=z\right]\sim\mathcal{N}\left(\rho z,1-\rho^{2}\right)$, then $X \sim \mathcal{N}\left(\rho \mu_{n}, 1 - \rho^{2} + \rho^{2}\sigma_{n}^{2}\right)$. Hence
\begin{equation}
\int_{-\infty}^{\infty}\operatorname{Pr}\left(X\leq\Phi^{-1}\left(\alpha\right)\middle|Z=z\right)f_{\widehat{Z}_{1:n}}\left(z\right)\mathrm{d}z = \Phi\left(\dfrac{\Phi^{-1}\left(\alpha\right)-\rho\mu_{n}}{\sqrt{1-\rho^{2}+\rho^{2}\sigma_{n}^{2}}}\right).
\end{equation}
The left inequality in \eqref{eq:success-prob-copula-lb} is a result of monotonicity in $m$ from Proposition \ref{thm:monotonicity_OO}\ref{thm:monotonicity_OO_m}. \qed

\section{Algorithms}
\label{sec:algorithms}

The optimised lower bound in \eqref{eq:success-prob-copula-lb-optimised} can be implemented numerically. This is done by using sufficient conditions found in the proof of Lemma \ref{lem:stoch-dom-constructed} to check whether $n \geq \widetilde{n}\left(\omega\right)$ for a given $n$ and $\omega$. We are required to check whether the inequality \eqref{eq:q-function-stoch-dom} is satisfied over each of the intervals $\left(\infty, \mu_{n}\right]$, $\left[\mu_{n}, 0\right]$ and $\left[0, \infty\right)$. The inequality is satisfied over $\left(\infty, \mu_{n}\right]$ by construction provided $n\mathfrak{c}_{1} > 1$, whereas \eqref{eq:right-interval-stoch-dom} contains the sufficient condition for the interval $\left[0, \infty\right)$. As for the bounded interval $\left[\mu_{n}, 0\right]$, we can directly evaluate (up to the available numerical precision) whether \eqref{eq:q-function-stoch-dom} is satisfied. Pseudocode to implement this numerical certificate is provided in Algorithm \ref{alg:test}. Using this certificate, we can implement the optimised lower bound \eqref{eq:success-prob-copula-lb-optimised}, with pseudocode for this found in Algorithm \ref{alg:lb}. \\

\begin{algorithm}[!htb]
\caption{Numerical certification of sufficient conditions for $n \geq \widetilde{n}\left(\omega\right)$ in Theorem \ref{thm:constructed-lower-bound}}
\label{alg:test}
\begin{algorithmic}[1]
\Function{NumericalCert}{$n$, $\omega$}
\State $\mathfrak{c}_{1} \gets \dfrac{1}{2} - \dfrac{\omega}{\pi}, \quad \mathfrak{c}_{2} \gets \dfrac{\cot{\omega}}{\pi - 2\omega}$
\State $\mu_{n} \gets -\sqrt{\dfrac{\log\left(n\mathfrak{c}_{1}\right)}{\mathfrak{c}_{2}}}, \quad \sigma_{n}^{2} \gets \dfrac{-\log\log 2}{2\mathfrak{c}_{2}\left(\log\left(n\mathfrak{c}_{1}\right) - \log\log 2\right)}$
\If{$n\mathfrak{c}_{1} \leq 1$}  \Comment Check sufficient condition for the interval $\left(\infty, \mu_{n}\right]$
    \State \Return $\mathtt{False}$
\ElsIf{\eqref{eq:q-function-stoch-dom} fails over $\left[\mu_{n}, 0\right]$} \Comment Check sufficient condition for the interval $\left[\mu_{n}, 0\right]$
    \State \Return $\mathtt{False}$
\ElsIf{\eqref{eq:right-interval-stoch-dom} fails} \Comment Check sufficient condition for the interval $\left[0, \infty\right)$
    \State \Return $\mathtt{False}$
\Else
    \State \Return $\mathtt{True}$
\EndIf
\EndFunction
\end{algorithmic}
\end{algorithm}

\begin{algorithm}[!htb]
\caption{Implementation of lower bounds in Theorem \ref{thm:constructed-lower-bound} and \eqref{eq:success-prob-copula-lb-optimised}}
\label{alg:lb}
\begin{algorithmic}[1]
\Function{LowerBound}{$n$, $\alpha$, $\rho$, $\omega$} \Comment{Lower bound in \eqref{eq:success-prob-copula-lb}}
\If{\Call{\textsc{NumericalCert}}{$n$, $\omega$}}
\State \Return Right-hand side of \eqref{eq:success-prob-copula-lb}
\Else
\State \Return $0$
\EndIf
\EndFunction
\Function{OptimisedLowerBound}{$n$, $\alpha$, $\rho$} \Comment{Optimised lower bound in \eqref{eq:success-prob-copula-lb-optimised}}
\State $\epsilon \gets$ machine epsilon
\State \Return $\max_{\omega \in \left[\epsilon, \pi/2 - \epsilon\right]}$\Call{\textsc{LowerBound}}{$n$, $\alpha$, $\rho$, $\omega$}
\EndFunction
\end{algorithmic}
\end{algorithm}

Algorithm \ref{alg:optimimised_n_high_prob} implements an optimised $n^{*}$ to address Problem \ref{prob:inversion}\ref{prob:inversion_n} for the Gaussian copula success probability, as described in Section \ref{sec:invert_lower_bound}. The procedure also calls the numerical certificate from Algorithm \ref{alg:test}.

\begin{algorithm}[!htb]
\caption{Implementation of optimised $n^{*}$ for solving Problem \ref{prob:inversion}\ref{prob:inversion_n} with Gaussian copula}
\label{alg:optimimised_n_high_prob}
\begin{algorithmic}[1]
\Function{nStar}{$\alpha$, $\rho$, $\delta$, $\omega$}
\State Solve quartic \eqref{eq:quartic} for greatest real root $\mathsf{x}$ 
\State $n^{*} \gets \left\lfloor \exp\left(\mathsf{x}^{2} - \log\mathfrak{c}_{1}\right)\right\rfloor $
\If{\Call{\textsc{NumericalCert}}{$n^{*}$, $\omega$}}
\State \Return $n^{*}$
\Else
\State \Return $\infty$
\EndIf
\EndFunction
\Function{nStarOptimised}{$\alpha$, $\rho$, $\delta$}
\State $\epsilon \gets$ machine epsilon
\State \Return $\min_{\omega \in \left[\epsilon, \pi/2 - \epsilon\right]}$\Call{\textsc{nStar}}{$\alpha$, $\rho$, $\delta$, $\omega$}
\EndFunction
\end{algorithmic}
\end{algorithm}

\end{appendices}

\end{document}